\documentclass[a4paper, reqno]{article} 

\usepackage[utf8]{inputenc}
\usepackage[T1]{fontenc}

\usepackage[english]{babel}

\usepackage[margin=1.2in]{geometry}
\usepackage{enumitem}
\usepackage[bookmarks = true]{hyperref}
\setlength{\parskip}{\medskipamount}
\setlength{\parindent}{0pt}
\setcounter{tocdepth}{1}

\usepackage{amsmath, amsthm, amssymb, bbm}
\usepackage[abbrev]{amsrefs}
\usepackage{unicode-math}
\usepackage[small,nohug,heads=vee]{diagrams}
\diagramstyle[labelstyle=\scriptstyle]
\theoremstyle{plain}
\newtheorem{thm}{Theorem}[section]
\newtheorem{lem}[thm]{Lemma}
\newtheorem{prop}[thm]{Proposition}
\newtheorem{cor}[thm]{Corollary}

\theoremstyle{definition}

\newtheorem{defn}[thm]{Definition}
\newtheorem{ex}[thm]{Example}

\numberwithin{equation}{section}

\newcommand{\overbar}[1]{\mkern 1.5mu\overline{\mkern-1.5mu#1\mkern-1.5mu}\mkern 1.5mu}


\newcommand{\mr}{\mathrm}



\newcommand{\Tor}{\operatorname{Tor}}

\newcommand{\Int}{\operatorname{Int}}


\newcommand{\Aut}{\operatorname{Aut}}

\newcommand{\Hom}{\operatorname{Hom}}
\newcommand{\End}{\operatorname{End}}

\newcommand{\Lie}{\operatorname{Lie}}
\newcommand{\Spec}{\operatorname{Spec}}

\newcommand{\Spf}{\operatorname{Spf}}

\newcommand{\colim}{\operatorname{colim}}

\newcommand{\id}{\mathrm{id}}

\newcommand{\len}{\operatorname{len}}

\renewcommand{\Im}{\operatorname{Im}}

\newcommand{\tensor}{\otimes}
\newcommand{\os}{\overset}

\newcommand{\iso}{\cong}



\newcommand{\mbE}{\mathbb{E}}
\newcommand{\mbF}{\mathbb{F}}

\newcommand{\mbL}{\mathbb{L}}

\newcommand{\mbQ}{\mathbb{Q}}

\newcommand{\mbV}{\mathbb{V}}

\newcommand{\mbX}{\mathbb{X}}

\newcommand{\mbZ}{\mathbb{Z}}


\newcommand{\mcC}{\mathcal{C}}

\newcommand{\mcE}{\mathcal{E}}
\newcommand{\mcF}{\mathcal{F}}

\newcommand{\mcI}{\mathcal{I}}
\newcommand{\mcJ}{\mathcal{J}}

\newcommand{\mcM}{\mathcal{M}}
\newcommand{\mcN}{\mathcal{N}}
\newcommand{\mcO}{\mathcal{O}}

\newcommand{\mcT}{\mathcal{T}}
\newcommand{\mcU}{\mathcal{U}}

\newcommand{\mcX}{\mathcal{X}}

\newcommand{\mcZ}{\mathcal{Z}}




\begin{document}

\title{Local Constancy of Intersection Numbers}
\author{Andreas Mihatsch}
\date{January 20, 2021}
\maketitle

\section{Introduction}
Intersection problems of formal schemes, parametrized by a locally profinite set such as the $\mbQ_p$-points of some variety, are quite common in arithmetic geometry. The aim of this article is to show that, under fairly general assumptions, such intersection numbers vary locally constantly with the parameter. The result applies in the context of the Arithmetic Fundamental Lemma (AFL) of W. Zhang and contributes to its recent proof over $\mbQ_p$, cf. \cite{Z_proof_AFL}. In fact, this is the main motivation for our work and the article can be read as an appendix to \cite{Z_proof_AFL}. Our result might also prove useful in related situations; for example, it similarly applies in the context of the Arithmetic Transfer Conjectures of \cite[§10--§12]{RSZ_regular_formal_moduli}.

Recall that the proof of the AFL in \cite{Z_proof_AFL} relies on global methods and that there already is a local constancy result, \cite[Theorem 5.5]{Z_proof_AFL}, which is needed to approximate $\mbQ_p$-parameters by $\mbQ$-parameters. However, it is only stated for the dense open of so-called \emph{strongly} regular semi-simple elements and, consequently, the AFL is proved with this additional restriction. Our result yields the local constancy for \emph{all} regular semi-simple elements and removes the restriction. We briefly formulate this corollary, but refer to \cite{Z_proof_AFL} for most of its notation; only the intersection number $\Int(g)$ will come up in this article and we provide full details in Section \ref{sectAFL}.
\begin{cor}[{{to \cite[Theorem 15.1]{Z_proof_AFL} and Corollary \ref{cor afl grp loc const}}}]\label{cor afl Qp holds}
The AFL holds for $F_0 = \mbQ_p$ and $p\geq n$. In other words, for $γ\in S_n(F_0)$ regular semi-simple with match $g\in U(\mbV_n)(F_0)$,
$$\partial \mr{Orb}(γ,1_{S_n(O_{F_0})}) = -\mr{Int}(g)\cdot \log q.$$
\end{cor}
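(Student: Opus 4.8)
The plan is to combine the global argument of \cite[Theorem 15.1]{Z_proof_AFL} with the local constancy statement established in this article. Recall that in \cite{Z_proof_AFL} the AFL identity at a parameter $\gamma \in S_n(\mbQ_p)$ is obtained by a global method: one approximates $\gamma$ by a parameter defined over $\mbQ$, inserts it into a comparison of relative trace formulas, and reads off the local identity at $\gamma$ from the resulting global equality. The one step that forces the restriction to \emph{strongly} regular semi-simple $\gamma$ is the appeal to \cite[Theorem 5.5]{Z_proof_AFL}: to descend from the global parameter back to $\gamma$ one needs $\mr{Int}(g)$ to be constant in a $p$-adic neighborhood of the match $g$, and in \cite{Z_proof_AFL} this local constancy is available only on the strongly regular semi-simple locus.

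Accordingly, the first step is to substitute Corollary \ref{cor afl grp loc const} for that appeal. It asserts that $g \mapsto \mr{Int}(g)$ is locally constant on the whole regular semi-simple locus of $U(\mbV_n)(\mbQ_p)$. One then transfers this along the orbit matching $\gamma \leftrightarrow g$ (a routine check that the matching is compatible with the $p$-adic topologies on the two regular semi-simple loci) and combines it with the standard local constancy of the analytic term $\gamma \mapsto \partial\mr{Orb}(\gamma, 1_{S_n(O_{F_0})})$ on $S_n(\mbQ_p)_{\mr{rs}}$. Both sides of the AFL identity are then locally constant functions on $S_n(\mbQ_p)_{\mr{rs}}$, so the locus where they agree is open and closed there; since the condition that the match of $\gamma$ lie in $U(\mbV_n)$ rather than in the other Hermitian space is itself open and closed, one may run the argument within that locus. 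As, by \cite[Theorem 15.1]{Z_proof_AFL}, the agreement locus contains the strongly regular semi-simple elements, which are dense in $S_n(\mbQ_p)_{\mr{rs}}$, the identity holds for every regular semi-simple $\gamma$ — the hypotheses $F_0 = \mbQ_p$ and $p \geq n$ being precisely those of \cite[Theorem 15.1]{Z_proof_AFL}.

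The main obstacle is not this deduction, which is essentially formal once the inputs are in place, but the proof of Corollary \ref{cor afl grp loc const} itself — the local constancy of arithmetic intersection numbers of formal schemes parametrized by a locally profinite set — which is the subject of the body of the article. The one point in the deduction that calls for genuine care is the reconciliation of the general, group-theoretic formulation of Corollary \ref{cor afl grp loc const} with the precise definition and normalization of $\mr{Int}(g)$ in terms of the relevant Rapoport--Zink space used in \cite{Z_proof_AFL}; this is taken up in Section \ref{sectAFL}.
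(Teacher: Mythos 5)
Your deduction is correct and matches the paper's intended argument: Corollary \ref{cor afl Qp holds} is stated in the paper as a formal consequence of Zhang's Theorem~15.1 (AFL on the strongly regular semi-simple locus), Corollary \ref{cor afl grp loc const} (local constancy of $\Int(g)$ on all of $G_{\mr{rs}}$), the standard local constancy of the derived orbital integral, and the density of the strongly regular semi-simple locus inside the regular semi-simple locus. You package this as a closure-of-the-agreement-locus argument, which is the cleanest way to phrase the extension, whereas the paper's phrasing (``removes the restriction'') suggests plugging Corollary \ref{cor afl grp loc const} directly in place of \cite[Theorem~5.5]{Z_proof_AFL} inside Zhang's proof; the two are mathematically the same, with the same inputs, and your version has the small advantage of treating \cite[Theorem~15.1]{Z_proof_AFL} as a black box rather than re-opening its proof.
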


Let us now describe the contents of the paper. The first aim, achieved in Section 2, is to define the product $\mcM_S := S\times \mcM$ of a profinite set $S$ and a locally noetherian formal scheme $\mcM$. It is obtained by gluing the affine formal schemes $\Spf C(S,A)$, where $\Spf A\subseteq \mcM$ is open affine and $C(S,A)$ the adic ring of continuous maps from $S$ to $A$. A \emph{family of closed formal subschemes of $\mcM$ parametrized by $S$} is then, by definition, simply a closed formal subscheme of $\mcM_S$.

In Section 3, we consider several such families $\mcZ_1,\ldots,\mcZ_r\subseteq \mcM_S$ and assume that $\mcM$ lies over the formal spectrum of a complete discrete valuation ring, $f\colon \mcM\rightarrow\Spf W$. Consider the following conditions:
\begin{enumerate}[leftmargin=*, label=(Z\arabic*), topsep=2pt, itemsep=2pt]
\item Each $\mcO_{\mcZ_i}$ is a perfect complex. By definition this means that it is, when viewed in the derived category of $\mcO_{\mcM_S}$-modules, locally on $\mcM_S$ represented by a finite complex of finite free modules, see \cite[\href{https://stacks.math.columbia.edu/tag/0BCJ}{Tag 0BCJ}]{Stacks}.
\item Each $\mcO_{\mcZ_i}$ is \emph{flat over $S$} in the following sense. Let $\mcM_s\subset\mcM_S$ denote the fiber above $s\in S$. It is the closed formal subscheme corresponding to the evaluation-in-$s$ maps $C(S,A)→A$ on rings. Then, for all such $s$ and all $j\geq 1$, we demand
$$\mcT\! or_j^{\mcO_{\mcM_S}}(\mcO_{\mcZ_i}, \mcO_{\mcM_s}) = 0.$$
\item Each fiber $\mcZ_s$ of the intersection $\mcZ := \mcZ_1\cap\ldots\cap \mcZ_r$ is a proper scheme over $\Spec W$ and the function $s\mapsto \mcZ_s$ is locally constant. By a fiber criterion, see Corollary \ref{cor fib crit}, this is equivalent to the seemingly stronger statement that there exist an open covering $S = \cup_i\, U_i$ and schemes $Z_i\subseteq \mcM$ which are proper over $\Spec W$ such that $U_i\times_S\mcZ = (Z_i)_{U_i}$.
\end{enumerate}
Note that for all homological algebra definitions and constructions in this article (e.g. (Z1) and (Z2) above), formal schemes are considered as ringed spaces only, the topology of the structure sheaf being irrelevant. Now assume that (Z1) and (Z3) hold. Then the fiber-wise intersection number
\begin{equation}\begin{aligned}\label{defInt}
\Int(s) :=\ & χ(\mcO_{\mcZ_{1,s}}\tensor^\mbL_{\mcO_{\mcM}}\cdots \tensor^\mbL_{\mcO_{\mcM}}\mcO_{\mcZ_{r,s}})\\
:=\ & \sum_{i}(-1)^i\mr{len}_WR^if_*(\mcO_{\mcZ_{1,s}}\tensor^\mbL_{\mcO_{\mcM}}\cdots \tensor^\mbL_{\mcO_{\mcM}}\mcO_{\mcZ_{r,s}}) \in \mbZ
\end{aligned}
\end{equation}
is defined and we are interested in its variation in $s$. On the other hand, we can directly study the complex
$$K:=\mcO_{\mcZ_1}\tensor^\mbL_{\mcO_{\mcM_S}}\cdots \tensor^\mbL_{\mcO_{\mcM_S}}\mcO_{\mcZ_r}.$$
It is perfect by (Z1) and we may consider the Euler-Poincaré characteristics $χ(K_s)$ of its fibers $K_s := K\tensor^\mbL_{\mcO_{\mcM_S}} \mcO_{\mcM_s},\ s\in S$. Proposition \ref{propLocconst} states that $χ(K_s)$ is automatically locally constant. The key observation here, which was explained to us by P. Scholze, is that the schemes $(Z_i)_{U_i}$ from (Z3) are \emph{coherent}. This implies that the cohomology sheaves of $K$ are finitely presented $\mcO_{\mcZ_1\cap\ldots\cap \mcZ_r}$-modules and hence locally constant over $S$.

The quantities $\Int(s)$ and $χ(K_s)$ agree if also (Z2) holds. This additional condition is imposed since the formation of $K_s$ involves a derived base change, but the fibers of the cycles $\mcZ_{i,s}$ are taken in the plain sense. The following is our main result in the general setting.
\begin{thm}\label{thmInt}
Let $f\colon \mcM\rightarrow \Spf W$ and $\mcZ_1,\ldots,\mcZ_r\subseteq \mcM_S$ satisfy (Z1), (Z2) and (Z3) as above. Then the function $\Int(s)$ is locally constant on $S$.
\end{thm}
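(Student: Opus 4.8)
The plan is to reduce the theorem to Proposition \ref{propLocconst}, which already gives that $s\mapsto χ(K_s)$ is locally constant on $S$. What then remains is the \emph{pointwise} identity $\Int(s) = χ(K_s)$ for every $s\in S$, and this is precisely where hypothesis (Z2) enters — (Z1) and (Z3) having been used beforehand to make both quantities well-defined, via perfectness of $K$ and properness of the fibers of $\mcZ$ respectively.

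Fix $s\in S$. Since everything in sight is local on $\mcM$, I would pass to an affine open $\Spf A\subseteq\mcM$ and argue with modules over the ring $R := C(S,A)$ and its quotient $A$ (evaluation at $s$); the paper's convention that formal schemes be treated as ringed spaces makes this legitimate. The elementary identity
$$(M_1\tensor^\mbL_R\cdots\tensor^\mbL_R M_r)\tensor^\mbL_R A\ \simeq\ (M_1\tensor^\mbL_R A)\tensor^\mbL_A\cdots\tensor^\mbL_A(M_r\tensor^\mbL_R A),$$
proved by induction on $r$ from associativity of $\tensor^\mbL$ and the triviality of $(-)\tensor^\mbL_A A$, globalizes over $\mcM_S$ to an isomorphism in the derived category of $\mcO_{\mcM_s}$-modules
$$K_s\ \simeq\ \bigl(\mcO_{\mcZ_1}\tensor^\mbL_{\mcO_{\mcM_S}}\mcO_{\mcM_s}\bigr)\tensor^\mbL_{\mcO_{\mcM_s}}\cdots\tensor^\mbL_{\mcO_{\mcM_s}}\bigl(\mcO_{\mcZ_r}\tensor^\mbL_{\mcO_{\mcM_S}}\mcO_{\mcM_s}\bigr).$$
By (Z2) each factor $\mcO_{\mcZ_i}\tensor^\mbL_{\mcO_{\mcM_S}}\mcO_{\mcM_s}$ has no higher homology, hence is just $\mcO_{\mcZ_i}\tensor_{\mcO_{\mcM_S}}\mcO_{\mcM_s}$; and this equals the structure sheaf of the plain fiber $\mcZ_{i,s} = \mcZ_i\times_{\mcM_S}\mcM_s$, a closed immersion pulling back to a closed immersion with the expected ideal. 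Under the canonical identification $\mcM_s\iso\mcM$ we therefore obtain $K_s\simeq \mcO_{\mcZ_{1,s}}\tensor^\mbL_{\mcO_\mcM}\cdots\tensor^\mbL_{\mcO_\mcM}\mcO_{\mcZ_{r,s}}$.

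It remains to take Euler characteristics. Both complexes are perfect on $\mcM\iso\mcM_s$ (by (Z1)) and supported, by (Z3), on the common fiber $\mcZ_s = \mcZ_{1,s}\cap\ldots\cap\mcZ_{r,s}$, which is proper over $\Spec W$; so applying $Rf_*$ followed by $\sum_i(-1)^i\len_W(-)$ to the right-hand side computes exactly $\Int(s)$ as in \eqref{defInt}, while the same operation on the left-hand side computes $χ(K_s)$. Since the isomorphism lives in the derived category and $χ$ depends only on the alternating sum of lengths of the cohomologies of $Rf_*$, we get $χ(K_s) = \Int(s)$, and hence $\Int(s)$ is locally constant by Proposition \ref{propLocconst}. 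I do not expect a genuine obstacle inside the theorem itself: the only care needed is the derived base-change bookkeeping behind the displayed isomorphism and checking that the normalization of $χ$ in Proposition \ref{propLocconst} is literally that of \eqref{defInt} — the real content of the local constancy having already been isolated in Proposition \ref{propLocconst} and its coherence input due to Scholze.
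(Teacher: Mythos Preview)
Your proposal is correct and follows essentially the same route as the paper's proof (Theorem \ref{thmLocconst}): form $K$, invoke Proposition \ref{propLocconst} for local constancy of $χ(K_s)$, then use the derived base-change identity together with (Z2) to identify $K_s$ with $\mcO_{\mcZ_{1,s}}\tensor^\mbL_{\mcO_\mcM}\cdots\tensor^\mbL_{\mcO_\mcM}\mcO_{\mcZ_{r,s}}$. The only point you leave slightly implicit is that, to feed $K$ into Proposition \ref{propLocconst}, one should first shrink $S$ via (Z3) so that $\mcZ_1\cap\cdots\cap\mcZ_r = Z_S$ for a fixed proper $Z\subseteq\mcM$, giving the required support condition on the $H^j(K)$; but this is immediate from the equivalence in (Z3).
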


Section 4 is devoted to its application in the context of the AFL. We define the family version of the AFL intersection problem and check the conditions of the theorem: (Z1) and (Z2) can be verified since the relevant cycles are only of two very specific types, namely divisors or translates of a constant family. (Z3) is essentially known, though not explicit in the literature, so we included it as Lemma \ref{lem loc const afl}. It is noteworthy that abstract arguments reduce this to a statement in Dieudonné theory only, a simplification that can be expected in other contexts as well.

\subsection*{Acknowledgment}
This article originated from the ARGOS seminar in Bonn where W. Zhang's proof of the AFL was studied during summer 2019. P. Scholze gave a sketch of the arguments presented here and I thank him heartily for allowing me to work out his ideas. I further thank J. Anschütz, M. Rapoport and W. Zhang for helpful comments on earlier versions of the paper. I am moreover indebted to the referee for valuable suggestions and for pointing out the insufficiency of the original proof of Proposition \ref{propKR}.

Part of this work was carried out during my stay at MIT with financial support from the Deutsche Forschungsgemeinschaft (DFG, Grant MI 2591/1-1). I am most grateful to MIT for providing such a welcoming working environment and to DFG for making my visit possible.

\section{Formal schemes and profinite sets}
The aim of this section is to define the product $\mcM_S := S\times\mcM$ of a profinite set $S$ and a locally noetherian formal scheme $\mcM$ and to prove some properties of this construction. We begin by examining the relevant rings.

For a profinite set $S$ and a ring $A$, we denote by $LC(S,A)$ the ring of locally constant functions from $S$ to $A$. It can also be described as follows. After choosing a presentation $S = \lim_iS_i$ of $S$ as a filtered inverse limit of finite sets $S_i$ (cf. \cite[\href{https://stacks.math.columbia.edu/tag/08ZY}{Tag 08ZY}]{Stacks}), we get $LC(S,A) = \colim_i LC(S_i,A)$ where the $i$-th term identifies with the product $\prod_{s\in S_i}A$. Note that this colimit is also filtered and that all its transition maps are flat. We similarly define $LC(S,M)$ for any $A$-module $M$ and obtain a similar description $LC(S,M) = \colim_iLC(S_i,M)$.
\begin{lem}\label{lemFlat}
\begin{enumerate}[leftmargin=*, label=(\arabic*), topsep=2pt, itemsep=2pt]
\item The natural map induces an identification $LC(S,A)\tensor_AM = LC(S,M).$
\item The map $A→ LC(S,A)$ is flat, and so are the evaluation maps for $s\in S$,
$$LC(S,A)→A,\ f\mapsto f(s).$$
\end{enumerate}
\end{lem}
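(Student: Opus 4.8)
The plan is to reduce everything to the finite level via the filtered colimit presentation $S = \lim_i S_i$, where each transition map is flat and $LC(S_i, A) = \prod_{s \in S_i} A$ is a finite product of copies of $A$. For part (1), I would first observe that for a finite set $S_i$ the claim $LC(S_i, A) \otimes_A M = LC(S_i, M)$ is trivial, since both sides are just the finite direct sum $\bigoplus_{s \in S_i} M$ and the natural map is visibly the identity. Then I would pass to the colimit: tensor product commutes with filtered colimits, so
$$LC(S, A) \otimes_A M = \bigl(\colim_i LC(S_i, A)\bigr) \otimes_A M = \colim_i \bigl(LC(S_i, A)\otimes_A M\bigr) = \colim_i LC(S_i, M) = LC(S, M),$$
and one checks this chain of identifications is induced by the natural map. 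This is the routine part.

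For part (2), the flatness of $A \to LC(S, A)$ follows from part (1) together with the general fact that a filtered colimit of flat $A$-algebras along flat transition maps is flat: each $LC(S_i, A) = \prod_{s \in S_i} A$ is free, hence flat, over $A$, and the transition maps $LC(S_i, A) \to LC(S_j, A)$ (for $S_j \to S_i$) are flat by hypothesis, so the colimit $LC(S, A)$ is flat over $A$. Concretely, for any injection $M \hookrightarrow N$ of $A$-modules, using part (1) we get $LC(S, A) \otimes_A M = LC(S, M)$ and similarly for $N$, and $LC(S, M) \hookrightarrow LC(S, N)$ since $LC(S, -) = \colim_i LC(S_i, -)$ is a filtered colimit of exact functors, hence exact.

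For the flatness of an evaluation map $\mathrm{ev}_s\colon LC(S, A) \to A$, $f \mapsto f(s)$: I would view $A$ as an $LC(S, A)$-module via $\mathrm{ev}_s$ and show it is flat. At the finite level, $\mathrm{ev}_s\colon \prod_{t \in S_i} A \to A$ is the projection onto the factor indexed by the image of $s$ in $S_i$, which exhibits $A$ as a direct factor of the ring $\prod_{t \in S_i} A$ — in particular $A$ is projective, hence flat, as a module over $\prod_{t \in S_i} A$. Passing to the colimit, $LC(S, A) = \colim_i \prod_{t \in S_i} A$ and $A$ with its $LC(S, A)$-module structure is the colimit of these compatible flat modules; since flatness is preserved under filtered colimits (of modules over a fixed base, or here one can argue directly that $A \otimes_{LC(S,A)} (-)$ is a filtered colimit of the exact functors $A \otimes_{LC(S_i,A)} (-)$), we conclude $A$ is flat over $LC(S, A)$. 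The only mild subtlety — the point I expect to require the most care — is keeping track of compatibility of the base-change maps through the colimit so that the module structures genuinely match up; but this is bookkeeping rather than a real obstacle, since all the maps in sight are the evident ones.
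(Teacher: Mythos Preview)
Your proof is correct and follows the same strategy as the paper: reduce to finite $S$ and pass to the filtered colimit. For part (1) and for the flatness of $A\to LC(S,A)$ your argument is identical to the paper's. For the evaluation maps the paper packages the colimit step as a separate statement (Lemma~\ref{lemLimitflat}: if $B=\colim_i B_i$ along flat transitions and $\phi\colon B\to C$ restricts to a flat map on each $B_i$, then $\phi$ is flat), proved via the finitely-generated-ideal criterion, because this lemma is reused in Proposition~\ref{propAdic}. Your alternative---that $A\otimes_{LC(S,A)}(-)$ is the filtered colimit of the exact functors $M\mapsto A\otimes_{LC(S_i,A)}M$ (after restriction of scalars)---is also valid; just note that the sentence ``$A$ with its $LC(S,A)$-module structure is the colimit of these compatible flat modules'' is not literally correct and should be replaced by the functor-level statement you give in parentheses.
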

\begin{proof}
Statement (1) is clearly true for $S$ finite. Recall that $\tensor$-products commutes with colimits, so the general case of (1) follows by taking the colimit over all isomorphisms $LC(S_i,A)\tensor_AM\iso LC(S_i,M)$. Statment (2) similarly follows from the finite case, but by applying Lemma \ref{lemLimitflat} below. We isolate this argument since we are going to use it again.
\end{proof}

\begin{lem}\label{lemLimitflat}
Let $B = \colim_iB_i$ be a filtered colimit of rings $B_i$ along flat transition maps. Let $ϕ\colon B→C$ be a ring homomorphism such that all restrictions $ϕ\vert_{B_i}$ are flat. Then the natural maps $B_i→B$ as well as $ϕ$ are flat.
\end{lem}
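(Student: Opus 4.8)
The plan is to deal with the two assertions separately: flatness of the maps $B_i→B$ is formal, and flatness of $ϕ$ then follows from it by a base-change argument.

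First I would show that each $B_i→B$ is flat. Since the index category is filtered, for a fixed $i$ the full subcategory of indices $j$ with $j\geq i$ is cofinal, so $B = \colim_{j\geq i}B_j$. Given any short exact sequence of $B_i$-modules, its base change along the transition map $B_i→B_j$ is exact because that map is flat by hypothesis; passing to the filtered colimit over $j\geq i$, and using that filtered colimits of abelian groups are exact, shows that $-\tensor_{B_i}B = \colim_{j\geq i}(-\tensor_{B_i}B_j)$ is an exact functor. Hence $B_i→B$ is flat.

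For the flatness of $ϕ$ I would use the standard criterion that it suffices to prove $\Tor_1^B(B/I,C) = 0$ for every finitely generated ideal $I\subseteq B$. Choosing generators $x_1,\ldots,x_n$ of $I$ and lifting them to elements $x_1',\ldots,x_n'$ of some $B_i$, one obtains an ideal $I_i = (x_1',\ldots,x_n')\subseteq B_i$ with $I_iB = I$. By the flatness just established, the natural map $I_i\tensor_{B_i}B→B$ is injective with image $I_iB = I$, so that $B/I\cong(B_i/I_i)\tensor_{B_i}B$ as $B$-modules. Now the flat base-change isomorphism for $\Tor$ (applicable because $B_i→B$ is flat and $C$ is a $B_i$-module via $ϕ\vert_{B_i}$) gives
$$\Tor_1^B(B/I,C)\cong \Tor_1^B\bigl((B_i/I_i)\tensor_{B_i}B,\,C\bigr)\cong \Tor_1^{B_i}(B_i/I_i,\,C),$$
and the right-hand side vanishes since $ϕ\vert_{B_i}$ is flat. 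As $I$ was an arbitrary finitely generated ideal, $C$ is flat over $B$.

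The only step requiring genuine care is the identification $B/I\cong(B_i/I_i)\tensor_{B_i}B$: finite generation of $I$ is exactly what guarantees that $I$ is extended from a finite level, and the flatness of $B_i→B$ is what upgrades the canonical surjection $I_i\tensor_{B_i}B\twoheadrightarrow I$ to an isomorphism. The remaining ingredients — exactness of filtered colimits and flat base change for $\Tor$ — are standard, so I do not anticipate a real obstacle.
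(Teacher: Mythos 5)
Your argument is correct and is essentially the same as the paper's: flatness of $B_i\to B$ via exactness of filtered colimits, then the $\Tor_1$-criterion on finitely generated ideals combined with the fact that any such ideal is extended from a finite stage $B_i$. Where the paper writes directly that $C\tensor_B(J\to B)=C\tensor_{B_i}(J_i\to B_i)$ is injective, you phrase the same computation as a flat base change isomorphism for $\Tor$; this is merely a difference in packaging, not in substance.
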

\begin{proof}
For fixed $i$, the set $\{j\mid j\geq i\}$ is also filtered and $B=\colim_jB_j$. Using that $\tensor$-products and colimits commute and that filtered colimits are exact, we see that $B\tensor_{B_i}-$ is an exact functor. In order to prove the flatness of $ϕ$, we recall that it is enough to show $\Tor_1^B(C,B/J)=0$ for all \emph{finitely generated} ideals $J\subseteq B$. Any such ideal is of the form $J_iB$ for some $i$ and some finitely generated ideal $J_i\subseteq B_i$. Furthermore, the just established flatness implies that $J = B\tensor_{B_i}J_i.$
It follows that $C\tensor_B(J→B) = C\tensor_{B_i}(J_i→B_i)$ is injective.
\end{proof}

From now on we assume that $A$ is noetherian and adic with ideal of definition $I$. We denote by $C(S,A)$ the ring of continuous maps from $S$ to $A$. We similarly define $C(S,M)$ whenever $M$ is an $I$-adically complete $A$-module.
It follows directly from the definitions that these coincide with the $I$-adic completions of $LC(S,A)$ and $LC(S,M)$, respectively. Then both $C(S,A)$ and $C(S,M)$ are themselves $I$-adically complete because $I$ is finitely generated, cf. \cite[\href{https://stacks.math.columbia.edu/tag/05GG}{Tag 05GG}]{Stacks}.

\begin{prop}\label{propAdic}
Let $A$ and $S$ be as above.\vspace{-4pt}
\begin{enumerate}[leftmargin=*, label=(\arabic*), topsep=2pt, itemsep=2pt]
\item If $M\os{α}{→}N\os{β}{→}P$ is an exact sequence of finitely generated $A$-modules, then the sequence $C(S,M)→C(S,N)→C(S,P)$ is also exact.
\item For a finitely generated $A$-module $M$, the natural map $C(S,A)\tensor_AM→C(S,M)$ is an isomorphism.
\item The maps $A→C(S,A)$ and $LC(S,A)→C(S,A)$ are flat.
\end{enumerate}
\end{prop}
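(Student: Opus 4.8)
The plan is to isolate a single exactness statement and deduce all three parts from it: the claim is that $C(S,-)$ carries a short exact sequence of finitely generated $A$-modules to a short exact sequence. To prove this, let $0\to M'\to M\to M''\to 0$ be such a sequence. The first point is the identity $I^n\cdot LC(S,M)=LC(S,I^nM)$, which holds because a locally constant function $S\to I^nM$ takes only finitely many values, each of them a finite $A$-linear combination of elements of $M$ with coefficients in $I^n$. Since $LC(S,-)=\colim_i(-)^{S_i}$ is exact, this gives $LC(S,M)/I^nLC(S,M)=LC(S,M/I^nM)$ and hence $C(S,M)=\lim_n LC(S,M/I^nM)$ (using that $C(S,-)$ is the $I$-adic completion of $LC(S,-)$), and similarly for $M'$ and $M''$. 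Now reduce the sequence modulo $I^nM$ and apply the exact functor $LC(S,-)$ to obtain, for every $n$, a short exact sequence
$$0\to LC(S,M'/(M'\cap I^nM))\to LC(S,M/I^nM)\to LC(S,M''/I^nM'')\to 0$$
with surjective transition maps as $n$ varies. Passing to the limit, the relevant $\lim^1$ vanishes by Mittag--Leffler, so the limit sequence is exact; and the Artin--Rees lemma (valid since $A$ is noetherian and $M$ finitely generated) shows that the filtration $\{M'\cap I^nM\}_n$ is cofinal with $\{I^nM'\}_n$, which identifies $\lim_n LC(S,M'/(M'\cap I^nM))$ with $C(S,M')$. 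This establishes $0\to C(S,M')\to C(S,M)\to C(S,M'')\to 0$.

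Granting this, part (1) follows formally. With $\alpha\colon M\to N$ and $\beta\colon N\to P$ as in the statement, set $L:=\ker\beta=\Im\alpha$ and factor $\alpha$ through $M\twoheadrightarrow L\hookrightarrow N$ and $\beta$ through $N\twoheadrightarrow N/L\hookrightarrow P$. Applying the exactness statement to the short exact sequences completing these maps shows that $C(S,M)\to C(S,L)$ is surjective, that $C(S,L)\to C(S,N)$ and $C(S,N/L)\to C(S,P)$ are injective, and that $\ker(C(S,N)\to C(S,N/L))=C(S,L)$; a diagram chase then gives $\Im C(S,\alpha)=\ker C(S,\beta)$. For part (2), choose a finite presentation $A^a\to A^b\to M\to 0$ (possible since $A$ is noetherian). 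Applying the exactness statement to $0\to K\to A^b\to M\to 0$ and to a surjection $A^a\twoheadrightarrow K$, and using the obvious identification $C(S,A^k)=C(S,A)^k$, one finds that $C(S,A)^a\to C(S,A)^b\to C(S,M)\to 0$ is exact. On the other hand $-\tensor_A M$ turns the presentation of $M$ into this same two-term complex with cokernel $C(S,A)\tensor_A M$, so the natural map $C(S,A)\tensor_A M\to C(S,M)$ is an isomorphism.

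Finally, for part (3): the map $A\to C(S,A)$ is flat because for a finitely generated ideal $J\subseteq A$ the group $\Tor_1^A(C(S,A),A/J)$ equals, by (2) and the exactness statement applied to $0\to J\to A\to A/J\to 0$, the kernel of the injection $C(S,J)\hookrightarrow C(S,A)$, hence is zero. For $LC(S,A)\to C(S,A)$ I would invoke Lemma \ref{lemLimitflat} with $B=LC(S,A)=\colim_i A^{S_i}$: the transition maps are flat (as already observed in the text), and each $A^{S_i}\to C(S,A)$ is flat since, decomposing $S$ into the finitely many clopen fibres $U_s$ of $S\to S_i$, this map is the finite product of the natural maps $A\to C(U_s,A)$, which are flat by the case of (3) just proved. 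The only genuinely non-formal ingredient is the use of Artin--Rees; beyond that I expect no real obstacle, only the bookkeeping of keeping the various exact sequences and their $I$-adic completions straight.
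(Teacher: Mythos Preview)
Your argument is correct. The overall strategy matches the paper's---Artin--Rees is the one non-formal input, and (2) and (3) are deduced from (1) in essentially the same way (a finite presentation plus a diagram chase for (2); vanishing of $\Tor_1$ on finitely generated ideals together with Lemma \ref{lemLimitflat} for (3)). The difference is in how (1) is obtained. The paper argues pointwise: given a continuous $n\colon S\to N$ with $\beta\circ n=0$, one observes that $n$ lands in $\Im\alpha$, invokes Artin--Rees to see that the subspace topology on $\Im\alpha$ is the $I$-adic one so that $n\in C(S,\Im\alpha)$, and then lifts to $C(S,M)$ using that $I$-adic completion preserves the surjection $LC(S,M)\twoheadrightarrow LC(S,\Im\alpha)$. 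You instead isolate the stronger statement that $C(S,-)$ preserves short exact sequences of finitely generated modules, proving it by writing $C(S,M)=\varprojlim_n LC(S,M/I^nM)$, applying the exact functor $LC(S,-)$ level-wise, and then taking the limit with Mittag--Leffler plus Artin--Rees to identify the left-hand term. Your packaging is slightly more systematic (everything is reduced to one exactness lemma) at the cost of a little more bookkeeping with inverse systems; the paper's function-lifting argument is shorter but less modular.
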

\begin{proof}
We prove (1). Let $n\colon S→ N$ be a continuous map such that $β(n)=0$. Then $n$ takes values in $\mr{Im}(α)$ and we need to show that it can be lifted to a continuous map $S→ M$. By Artin--Rees, the subspace topology on $\mr{Im}(α)$ is the $I$-adic one, so $n\in C(S,\mr{Im}(α))$. The map $LC(S,M)→ LC(S,\mr{Im}(α))$ is surjective and so, by \cite[\href{https://stacks.math.columbia.edu/tag/0315}{Tag 0315}]{Stacks}, the map on $I$-adic completions $C(S,M)→ C(S,\mr{Im}(α))$ is surjective as well. It follows that a lift exists.

To prove (2), we choose an exact sequence $0→Q→A^r→M→0$ and obtain a commutative diagram with exact rows
\newarrow{eqArrow} =====
\begin{diagram}
&& C(S,A)\tensor_AQ & \rTo & C(S,A)\tensor_AA^r & \rTo & C(S,A)\tensor_AM & \rTo & 0\\
&& \dTo && \deqArrow && \dTo\\
0 & \rTo & C(S,Q) & \rTo & C(S,A^r) & \rTo & C(S,M) & \rTo & 0.
\end{diagram}
The middle vertical equality implies that the right vertical arrow is surjective. Repeating the argument for $Q$ instead of $M$ shows that the left vertical arrow is also surjective. By a diagram chase, the right vertical arrow is then also injective.

We finally come to (3). The flatness of $A→C(S,A)$ follows from (1) and (2) and the fact that it can be checked on finitely generated $A$-modules. The same argument shows flatness of all the maps $LC(S_i,A)→C(S,A)$ and we conclude by applying Lemma \ref{lemLimitflat}.
\end{proof}

Recall that a ring is coherent if each of its finitely generated ideals is finitely presented, cf. \cite[\href{https://stacks.math.columbia.edu/tag/05CV}{Tag 05CV}]{Stacks}. All the rings $LC(S_i,A)$ are coherent (even noetherian) since we assumed $A$ to be so. It follows that $LC(S,A)$ is also coherent, since this property is stable under taking filtered colimits along flat transition maps.

\begin{cor}\label{corCoh}
Let $A$ and $S$ be as above and let $J\subseteq A$ be an open ideal. Then $C(S,A)/J C(S,A)$ is isomorphic to $LC(S,A/J)$. In particular, it is a coherent ring.
\end{cor}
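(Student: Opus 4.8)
The plan is to obtain the isomorphism from Proposition \ref{propAdic}, applied to the short exact sequence $0 \to J \to A \to A/J \to 0$. Two preliminary remarks are in order: since $A$ is noetherian, $J$ is finitely generated; and since $J$ is open it contains $I^n$ for some $n\geq 1$, so $I^n(A/J) = 0$ and $A/J$ is discrete in its $I$-adic topology. In particular $A/J$ is $I$-adically complete, so $C(S, A/J)$ is defined, and since $S$ is profinite while $A/J$ is discrete, a continuous map $S\to A/J$ is the same as a locally constant one, i.e. $C(S,A/J) = LC(S, A/J)$.

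Now I would apply Proposition \ref{propAdic}(1) to the exact sequence $J \xrightarrow{\iota} A \xrightarrow{\pi} A/J$ of finitely generated $A$-modules: this shows $C(S,J) \xrightarrow{C(S,\iota)} C(S,A) \xrightarrow{C(S,\pi)} C(S,A/J)$ is exact. The map $C(S,\pi)$ is surjective, because the evident surjections $LC(S_i, A) = \prod_{s\in S_i} A \to \prod_{s \in S_i} A/J = LC(S_i, A/J)$ pass to the filtered colimit to give a surjection $LC(S,A) \to LC(S,A/J) = C(S,A/J)$, and this factors through $C(S,A)$. Moreover, by Proposition \ref{propAdic}(2) the natural map $C(S,A) \otimes_A J \to C(S,J)$ is an isomorphism, through which $C(S,\iota)$ is identified with the canonical map $C(S,A)\otimes_A J \to C(S,A)\otimes_A A = C(S,A)$; hence $\Im C(S,\iota) = J\, C(S,A)$. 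Combining these three facts,
\[
C(S,A)/J\,C(S,A) \;=\; C(S,A)/\Im C(S,\iota) \;=\; C(S,A)/\ker C(S,\pi) \;\xrightarrow{\ \sim\ }\; C(S,A/J) \;=\; LC(S,A/J),
\]
an isomorphism of rings.

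For the coherence assertion I would simply repeat the argument from the paragraph preceding the corollary with $A/J$ in place of $A$: the ring $A/J$ is noetherian, so each $LC(S_i,A/J) = \prod_{s\in S_i} A/J$ is noetherian, hence coherent; the transition maps in $LC(S,A/J) = \colim_i LC(S_i,A/J)$ are flat, being finite products of the free maps $A/J \to (A/J)^k$; and coherence is preserved under filtered colimits along flat transition maps. I do not expect a genuine obstacle: the only point requiring a little care is confirming that Proposition \ref{propAdic}(2) really does identify $\Im C(S,\iota)$ with $J\,C(S,A)$, which is a short diagram chase once one uses that $J$ is finitely generated, and checking that $A/J$ being discrete indeed forces $C(S,A/J) = LC(S,A/J)$.
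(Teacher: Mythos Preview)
Your argument is correct and follows essentially the same route as the paper: both proofs use Proposition~\ref{propAdic}(1) for the exactness of $C(S,J)\to C(S,A)\to C(S,A/J)$ and Proposition~\ref{propAdic}(2) to identify $C(S,J)$ with $C(S,A)\otimes_A J$, then observe $C(S,A/J)=LC(S,A/J)$ since $A/J$ is discrete. The only cosmetic difference is that the paper invokes flatness (part~(3)) to write $JC(S,A)=C(S,A)\otimes_A J$ before applying~(2), whereas you bypass~(3) by noting directly that the image of $C(S,A)\otimes_A J\to C(S,A)$ is $JC(S,A)$; your version is marginally more elementary but the content is the same.
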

\begin{proof}
By part (3) of Proposition \ref{propAdic}, $J C(S,A) = C(S,A)\tensor_AJ$. By part (2), it further equals $C(S,J)$. Applying part (1) shows $C(S,A)/C(S,J) = C(S,A/J)$ which equals $LC(S,A/J)$.
\end{proof}

The ring $C(S,A)$ is itself an adic ring with ideal of definition $IC(S,A)=C(S,I)$ and we consider its formal spectrum $(\Spf A)_S := \Spf C(S,A)$ as an affine formal scheme in the sense of EGA, i.e. as a locally topologically ringed space, see \cite[D\'efinition 10.1.2]{EGAI}. Note that $|(\Spf A)_S| = S\times|\Spf A|$ on the level of topological spaces. The next lemma shows that this product description is compatible with the formation of rings of sections.
\begin{lem}\label{lemLocalization}
Let $A$ and $S$ be as above and let $f\in A$ be any. Then the natural map
$$\widehat{C(S,A)[f^{-1}]}→C(S,\widehat{A[f^{-1}]})$$
is an isomorphism, where $\widehat{(\,\cdot\,)}$ denotes $I$-adic completion.
\end{lem}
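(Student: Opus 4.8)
The plan is to reduce the claim to the already-established flatness and completion compatibilities. First, I would observe that the target $C(S,\widehat{A[f^{-1}]})$ is, by definition, the $I$-adic completion of $LC(S,A[f^{-1}])$, while the source $\widehat{C(S,A)[f^{-1}]}$ is the $I$-adic completion of $C(S,A)[f^{-1}]$. Since $I$-adic completion of a module over a noetherian adic ring is insensitive to replacing a module by one with the same $I$-adic topology, and since $I$ is finitely generated, it suffices to exhibit a map between the two uncompleted rings inducing an isomorphism on $I^n$-torsion quotients, or more directly, to identify their completions. The cleanest route: show that both sides are naturally isomorphic to $C(S,A)\,\widehat{\otimes}_A\, \widehat{A[f^{-1}]}$, i.e. the completed tensor product.

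The key computation goes through modulo $J := I^n$ for each $n$. Here I would use Corollary \ref{corCoh}: $C(S,A)/J C(S,A) \cong LC(S, A/J)$. Therefore $\bigl(C(S,A)[f^{-1}]\bigr)/J \cong LC(S,A/J)[f^{-1}] = LC(S, (A/J)[f^{-1}])$, using that localization commutes with the finite-product description $LC(S_i, -) = \prod_{s\in S_i}(-)$ and with filtered colimits. On the other hand, $C(S, \widehat{A[f^{-1}]})/J \cong LC(S, \widehat{A[f^{-1}]}/J) = LC(S, (A/J)[f^{-1}])$, using that $\widehat{A[f^{-1}]}/I^n\widehat{A[f^{-1}]} = A[f^{-1}]/I^nA[f^{-1}] = (A/I^n)[f^{-1}]$ since $A$ is noetherian. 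Thus both sides have canonically isomorphic reductions mod $J$ for every open $J$, compatibly in $J$, and the natural map in the statement is compatible with these identifications. Passing to the inverse limit over $n$ — legitimate because both sides are $I$-adically complete by the discussion preceding Proposition \ref{propAdic} — gives the isomorphism. One should also check that the natural map is the right one, i.e. that it is the unique continuous $A$-algebra map sending $f$ to its inverse, which pins it down and makes the mod-$J$ comparison canonical.

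I expect the main obstacle to be bookkeeping rather than anything deep: one must be careful that the evident map $C(S,A)[f^{-1}]\to C(S,\widehat{A[f^{-1}]})$ actually lands in the target (a continuous map $S\to A$ composed with $A\to \widehat{A[f^{-1}]}$ is continuous, and one inverts $f$ on the target), that it is continuous for the respective $I$-adic topologies so it extends to completions, and that the mod-$J$ reductions on both sides really are \emph{the same} locally constant function ring — this last point relies on unwinding that $C(S,-)$ modulo an open ideal forgets the completion and recovers $LC(S,-)$ of the discrete quotient, which is exactly Corollary \ref{corCoh}. A small additional check: localization at $f$ commutes with the filtered colimit $LC(S,A/J) = \colim_i \prod_{s\in S_i} A/J$ and with finite products, which is immediate since $f^{-1}$ is a finite-type operation; and $(A/I^n)[f^{-1}] = A[f^{-1}]/I^n A[f^{-1}]$ uses noetherianness (flatness of localization) so the two quotient descriptions genuinely agree. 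Once these compatibilities are recorded, the isomorphism of completions is formal.
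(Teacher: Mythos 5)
Your proposal is correct and follows essentially the same route as the paper: observe both sides are $I$-adically complete, reduce modulo $I^n$, and use Corollary~\ref{corCoh} to identify both reductions with $LC(S,(A/I^n)[f^{-1}])$. The only difference is that you spell out the step the paper leaves implicit — that localization commutes with the finite-product and filtered-colimit description of $LC(S,-)$, and that $\widehat{A[f^{-1}]}/I^n = (A/I^n)[f^{-1}]$ — and you add the harmless aside about completed tensor products, which the argument does not ultimately need.
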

\begin{proof}
Both sides are $I$-adically complete, so it is enough to check this modulo $I^n$ for every $n$. On the left hand side, we get $\widehat{C(S,A)[f^{-1}]}/I^n = C(S,A)/I^n[f^{-1}]$ which equals $LC(S,A/I^n)[f^{-1}]$ by Corollary \ref{corCoh}. On the right hand side, the same corollary yields
$C(S,\widehat{A[f^{-1}]})/I^n = LC(S,A[f^{-1}]/I^n)$ and the claim follows.
\end{proof}

\begin{defn}
For $\mcM$ a locally noetherian formal scheme, we denote by $\mcM_S$ the formal scheme obtained by gluing all $(\Spf A)_S$, for $\Spf A\subseteq \mcM$ open affine, according to Lemma \ref{lemLocalization}.
\end{defn}
We have $|\mcM_S|=S\times|\mcM|$ on underlying topological spaces and the construction is functorial in both $\mcM$ and $S$. Two special cases are the projection map $\mcM_S→\mcM$ and the inclusion of fiber maps $\mcM_s\hookrightarrow \mcM_S$ for $s\in S$, which are closed immersions. The projection map is flat in general by Proposition \ref{propAdic} (3). The fiber inclusions are only flat in special situations, e.g. if $\mcM$ is a scheme, in which case we may apply \ref{lemFlat} (2).

In preparation for the next result, we now make some remarks concerning closed formal subschemes. By definition, a closed immersion $ι\colon \mcZ→\mcM_S$ is a morphism of formal schemes such that $|ι|\colon |\mcZ|→|\mcM_S|$ is a closed immersion and $ι^{-1}\colon \mcO_{\mcM_S}→ι_*\mcO_{\mcZ}$ a surjection of sheaves of topological rings. The latter is supposed to mean that $\mcO_\mcZ$ also carries the quotient topology.\footnote{For example, $\Spf \mbZ_p→\Spec \mbZ_p$ is not a closed immersion.} As $\mcO_\mcZ$ is separated, the kernel of $ι^{-1}$ is then a closed ideal sheaf of $\mcO_{\mcM_S}$. Every morphism of affine formal schemes is induced from a map on global sections, see \cite[Proposition 10.2.2]{EGAI}, so $ι$ is locally isomorphic to maps of the form $\Spf C(S,A)/J → \Spf C(S,A)$ with $J$ a closed ideal of $C(S,A)$. Conversely, this yields a local description of closed immersions.
\begin{ex}\label{ex closed ideals}
\begin{enumerate}[leftmargin=*, label=(\arabic*), topsep=2pt, itemsep=2pt]
\item Let $J\subseteq A$ be an ideal. Then $C(S,J)\subseteq C(S,A)$ is closed. Indeed, if $(j_n)_n$ is a converging sequence of continuous functions $j_n\colon S→J$, then $\lim j_n(s) \in J$ since $J$ is closed in $A$. Proposition \ref{propAdic} implies $C(S,A)/C(S,J) = C(S, A/J)$. This shows that the natural map
$$\Spf C(S,A/J) → \Spf C(S,A)$$
is a closed immersion. In other words, the ``base extension'' $\mcZ_S→\mcM_S$ of a closed immersion $\mcZ → \mcM$ is also a closed immersion as expected.
\item Consider $S = \mbZ_p$ and $f\in C(S,\mbZ_p)$ the identity map $f(s) = s$. Then $f$ is a non-zero divisor but the generated ideal $(f)$ is not closed. For example $s\mapsto (-1)^{v_p(s)}s \in \overbar{(f)}\setminus (f)$.
\item Assume now that $f\in C(S,A)$ is such that all fibers $f(s)$ are non-zero divisors. In particular $f$ is a non-zero divisor. Artin--Rees implies that for each $s$ there is a constant $k(s)$ with
$$I^{m+k(s)}\cap (f(s)) \subseteq I^mf(s).$$
Lemma \ref{lem1} (2) below shows that $k(s)$ may be chosen uniformly for all $s\in S$; denote a choice of such constant by $k$ and consider a converging sequence $(a_nf)_n,\ a_n\in C(S,A)$. Then $(a_n-a_m)f\in C(S,I^{m+k})$ implies $a_n-a_m\in C(S,I^m)$, which can be checked fiber-wise using that all $f(s)$ are non-zero divisors. It follows that $(a_n)_n$ converges and, hence, that the generated ideal $(f)$ is closed.
\end{enumerate}
\end{ex}

Let us fix an ideal sheaf of definition $\mcI\subseteq \mcO_{\mcM_S}$; existence is guaranteed since we assumed $\mcM$ to be locally noetherian. Then an ideal sheaf $\mcJ\subseteq \mcO_{\mcM_S}$ is closed if and only if $\mcJ = \lim_n \mcJ/(\mcJ\cap \mcI^n)$. Moreover, $\mcJ$ defines a closed formal subscheme (i.e. the image of a closed immersion) if and only if each quotient $\mcJ/(\mcJ\cap \mcI^n)$ is a quasi-coherent ideal sheaf of $\mcO_{\mcM_S}/\mcI^n$. In other words, the closed formal subschemes of $\mcM_S$ are precisely the compatible families of closed subschemes of the $V(\mcI^n)$.

\begin{prop}\label{prop fib crit}
Let $\mcZ\subseteq \mcM_S$ be a closed formal subscheme.\vspace{-4pt}
\begin{enumerate}[leftmargin=*, label=(\arabic*), topsep=2pt, itemsep=2pt]
\item $\mcZ$ is determined by all its fibers $\mcZ_s$ in the following sense. If $\mcZ'\subseteq \mcM_S$ is another closed formal subscheme such that $\mcZ_s\subseteq \mcZ'_s$ for all $s$, then $\mcZ \subseteq \mcZ'$.
\item $\mcZ$ is a scheme if and only if each fiber $\mcZ_s$ is a scheme.
\item If $\mcZ$ is a quasi-compact scheme that is locally defined by finitely many equations, then it is locally constant. By this we mean that there exist an open covering $S = \cup_i\, U_i$ and schemes $Z_i\subseteq \mcM$ such that $U_i\times_S\mcZ = Z_{U_i}$ for all $i$.
\end{enumerate}
\end{prop}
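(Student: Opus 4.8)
The plan is to prove all three parts after restricting to an open affine $\Spf A\subseteq\mcM$, where $A$ is noetherian adic with ideal of definition $I$, so that $\mcM_S$ restricts to $\Spf C(S,A)$; I write $\mcZ=V(J)$ and $\mcZ'=V(J')$ for the corresponding closed ideals of $C(S,A)$. Parts (1) and (2) are genuinely local on $\mcM$; part (3) will in addition use quasi-compactness of $\mcZ$ to reduce to finitely many such charts, followed by a gluing step. Throughout I use that $I^nC(S,A)=C(S,I^n)$ and that these are a fundamental system of open ideals (Proposition \ref{propAdic}), and that the fibre $\mcZ_s$ is $\Spf(A/\mathrm{ev}_s(J))$ for the evaluation map $\mathrm{ev}_s\colon C(S,A)\to A$ — which rests on $A/\mathrm{ev}_s(J)$ being already $I$-adically complete, as $A$ is complete noetherian and $I\subseteq\operatorname{Jac}(A)$ — so that the ideal of $\mcZ_s$ inside $A$ is $\mathrm{ev}_s(J)$. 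The engine of everything is a single \emph{idempotent trick}: if $g\in C(S,A)$ is such that for every $s$ there is $h_s\in J$ with $h_s(s)=g(s)$, then $g\in J$. To see it, fix $n$; the set $\{t:g(t)-h_s(t)\in I^n\}$ is open because $I^n$ is an open ideal, hence contains a clopen neighbourhood $U_s\ni s$; by compactness of $S$ finitely many $U_{s_1},\dots,U_{s_k}$ cover $S$, and refining to a clopen partition $S=\coprod_l V_l$ with $V_l\subseteq U_{s_{j(l)}}$ one gets $\mathbbm 1_{V_l}g\equiv\mathbbm 1_{V_l}h_{s_{j(l)}}\pmod{C(S,I^n)}$, so summing $g\in J+C(S,I^n)$; letting $n\to\infty$ and using that $J$ is closed gives $g\in J$. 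Part (1) is then immediate: $\mcZ_s\subseteq\mcZ'_s$ for all $s$ says $\mathrm{ev}_s(J')\subseteq\mathrm{ev}_s(J)$ for all $s$, so any $g\in J'$ satisfies $g(s)\in\mathrm{ev}_s(J)$, i.e.\ $g(s)=h_s(s)$ with $h_s\in J$, whence $g\in J$ and $\mcZ\subseteq\mcZ'$.

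For part (2), one direction is formal: if $\mcZ$ is a scheme then $\mcZ_s=\mcZ\times_{\mcM_S}\mcM_s\to\mcZ$ is a closed immersion (base change of $\mcM_s\hookrightarrow\mcM_S$), hence $\mcZ_s$ is a closed subscheme of a scheme, hence a scheme. For the converse I would reformulate via Nakayama: $\mcZ_s$ is a scheme iff $I^m(A/\mathrm{ev}_s(J))=0$ for some $m$, and since $I^m(A/\mathrm{ev}_s(J))$ is a finitely generated $A$-module and $I\subseteq\operatorname{Jac}(A)$, this is equivalent to $I^m\subseteq I^{m+1}+\mathrm{ev}_s(J)$ for some $m$. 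The sets $S_m:=\{s:I^m\subseteq I^{m+1}+\mathrm{ev}_s(J)\}$ increase in $m$ and cover $S$, and the point is that each is open: reducing modulo $I^{m+1}$ one works over the \emph{discrete} noetherian ring $\bar A:=A/I^{m+1}$ with the image $\bar J\subseteq C(S,\bar A)=LC(S,\bar A)$ of $J$, and writing $S_m=\bigcap_i\{s:\bar c_i\in\mathrm{ev}_s(\bar J)\}$ for generators $\bar c_i$ of $I^m\bar A$, one observes that if $h\in\bar J$ has $h(s_0)=\bar c_i$ then, since $LC(S,\bar A)=\colim_k LC(S_k,\bar A)$, $h$ is constant equal to $\bar c_i$ on a clopen neighbourhood $F$ of $s_0$, so $\mathbbm 1_F h=\mathbbm 1_F\cdot(\text{const }\bar c_i)\in\bar J$ witnesses $\bar c_i\in\mathrm{ev}_s(\bar J)$ for all $s\in F$. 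By compactness $S=S_{m_0}$ for some $m_0$, Nakayama yields $I^{m_0}\subseteq\mathrm{ev}_s(J)$ for all $s$, and finally the idempotent trick applied to $g\in I^{m_0}C(S,A)=C(S,I^{m_0})$, whose values lie in $I^{m_0}\subseteq\mathrm{ev}_s(J)$, gives $I^{m_0}C(S,A)\subseteq J$; thus $\mcZ$ is a scheme.

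For part (3), quasi-compactness of $\mcZ$ together with the "finitely many equations" hypothesis lets me cover $\mcZ$ by finitely many basic opens $(\Spf A_j)_{V_j}$ of $\mcM_S$ ($\Spf A_j\subseteq\mcM$ open affine, $V_j\subseteq S$ clopen) on which $\mcZ=V(J_j)$ with $J_j$ finitely generated; by part (2), after shrinking, $I_j^{n_j}C(V_j,A_j)\subseteq J_j$, so $\mcZ$ is there cut out by the finitely generated ideal $\bar J_j\subseteq LC(V_j,A_j/I_j^{n_j})$ equal to the image of $J_j$. The crux is the one-chart case: for a finitely generated ideal $\bar J\subseteq LC(V,\bar A)$ with $\bar A=A/I^n$ discrete, choose a finite quotient $V\to V'$ through which all chosen generators factor; then for the partition $V=\coprod_{t\in V'}V_t$ into fibres one has $LC(V',\bar A)=\prod_t\bar A$ and $\bar J$ is extended from an ideal of the form $\prod_t\mathfrak a_t$, so by flatness of $LC(V',\bar A)\to LC(V,\bar A)$ and Lemma \ref{lemFlat} one computes $\mcO_\mcZ=LC(V,\bar A)/\bar J=\prod_t LC(V_t,\bar A/\mathfrak a_t)$ — that is, over $V_t$ the family $\mcZ$ is the constant family $(V(\mathfrak a_t))_{V_t}$. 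To globalise I would refine $S$ into a finite clopen partition $\{U_\alpha\}$ that refines all the $V_j$ and, for each $j$, the clopen partition of $V_j$ furnished by the one-chart case; then on each $U_\alpha$ and each chart $j$ with $U_\alpha\subseteq V_j$, the restriction $U_\alpha\times_S\mcZ$ over $(\Spf A_j)_{U_\alpha}$ is a constant family, and comparing fibres at a chosen point $s\in U_\alpha$ identifies these restrictions with those of the single closed subscheme $\mcZ_s\subseteq\mcM$, so they glue to $U_\alpha\times_S\mcZ=(\mcZ_s)_{U_\alpha}$; and $\mcZ_s$ is a scheme by part (2).

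I expect the real obstacle to be the converse in part (2): the bound $I^{m(s)}\subseteq\mathrm{ev}_s(J)$ is immediate for each individual $s$, but one must make it uniform, and the non-obvious step is that Nakayama reduces the relevant condition to one modulo $I^{m+1}$, where the coefficient ring is discrete and \emph{every} continuous function is locally constant — precisely what makes the loci $S_m$ open and lets compactness conclude. Part (1) is the idempotent trick alone, and part (3), once the one-chart computation is isolated, is bookkeeping with clopen partitions that quasi-compactness of $\mcZ$ makes finite.
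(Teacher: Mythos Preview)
Your proof is correct. For parts (1) and (3) you follow essentially the same route as the paper: reduce modulo $C(S,I^n)$ so that all functions become locally constant, then use compactness of $S$ and closedness of $J$. Your ``idempotent trick'' is a clean packaging of exactly this reduction, and your more detailed gluing in (3) spells out what the paper leaves implicit.

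For part (2) you take a genuinely different route. The paper proves a \emph{semi-continuity} statement: given $s$, it lifts generators of $J_s$ to elements $g_1,\dots,g_m\in J$, uses continuity to find a neighbourhood $U$ on which $g_i - g_i(s)\in C(U,IJ_s)$, and then Nakayama gives $J_s\subseteq J_{s'}$ for all $s'\in U$; an application of (1) then shows $U\times_S\mcZ\subseteq\Spec LC(U,A/J_s)$. You instead prove \emph{uniformity of the exponent}: you show the loci $S_m=\{s:I^m\subseteq I^{m+1}+\mathrm{ev}_s(J)\}$ are open (by reducing mod $I^{m+1}$ to make everything locally constant), exhaust $S$ by compactness to get a global $m_0$, and then use your idempotent trick to conclude $I^{m_0}C(S,A)\subseteq J$. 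Both arguments rest on the same ingredients (Nakayama, local constancy modulo $I^n$, compactness), but the paper's organisation is shorter and yields the semi-continuity $s\mapsto J_s$ as a bonus, while yours is more direct for the bare statement and makes the role of compactness more visible.
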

\begin{proof}
All three statements reduce to the affine case $\mcM = \Spf A$. Let $\mcZ = \Spf C(S,A)/J$ with $J$ closed.

For statement (1), say $\mcZ' = \Spf C(S,A)/J'$ with $J'$ closed and fix some ideal of definition $I\subset A$. The quotients $C(S,A)/C(S,I^n)$ identify with $LC(S,A/I^n)$. Its elements are locally constant, so the fiber-wise inclusions $\mcZ_s\subseteq \mcZ'_s$ imply $J'/(J'\cap C(S,I^n)) \subseteq J/(J\cap C(S,I^n))$. Taking the inverse limit over $n$ and using that both $J$ and $J'$ are closed, the inclusion $J'\subseteq J$ follows.

For the non-trivial direction of (2), denote by $J_s\subseteq A$ the defining ideal of $\mcZ_s$ and assume that all $J_s$ are open. We prove the slightly stronger semi-continuity statement that $J_s\subseteq J_{s'}$ for all $s'$ in an open neighborhood of $s$.

Given $s$, pick $g_1,\ldots,g_m\in J$ such that their values $g_1(s),\ldots,g_m(s)$ generate $J_s$. By continuity of the $g_i$, there is an open neighborhood $U$ of $s$ such that
$$(g_i-g_i(s))\vert_{U}\in C(U,IJ_s)\ \ \forall i.$$
In particular, all $g_i\vert_U$ lie in $C(U,J_s)$ and the $g_i(s')$ generate $J_s$ for all $s'\in U$ by the usual Nakayama Lemma. It follows that $J_s\subseteq J_{s'}$ for $s'\in U$ and hence $U\times_S \mcZ \subseteq \Spec LC(U, A/J_s)$ by (1).

We prove (3). If $\mcZ$ is a scheme, then it is contained in $\Spec LC(S,A/I^n)$ for $n$ large enough. If it is moreover defined by finitely many equations, then it has to be locally constant since these functions are locally constant modulo $I^n$.
\end{proof}

\begin{cor}[Fiber Criterion]\label{cor fib crit}
Let $\mcZ\subseteq \mcM_S$ be a closed formal subscheme such that all fibers $\mcZ_s$ are schemes and such that the function $s\mapsto \mcZ_s$ is locally constant. Then $\mcZ$ is locally constant. In particular, the equivalence claimed in the definition of (Z3) holds.
\end{cor}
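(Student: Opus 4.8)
The plan is to localize on $S$ and reduce everything to the rigidity statement of Proposition \ref{prop fib crit}(1). Fix $s_0\in S$. Since $S$ is profinite, clopen subsets form a basis of its topology, so the assumed local constancy of $s\mapsto\mcZ_s$ provides a clopen neighborhood $U\ni s_0$ such that, under the canonical identifications $\mcM_s\cong\mcM$, one has $\mcZ_s=\mcZ_{s_0}$ for every $s\in U$. Put $Z:=\mcZ_{s_0}$; by hypothesis it is a scheme, and in any case a closed formal subscheme of $\mcM$, so that $Z\hookrightarrow\mcM$ is a closed immersion. First I would record that $U$, being clopen in the profinite set $S$, is again profinite, that the construction of Section 2 and Proposition \ref{prop fib crit} therefore apply to $\mcM_U$, and that $\mcM_U$ is the open-and-closed sub-formal-scheme $U\times_S\mcM_S$ of $\mcM_S$.

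The core step is then to compare the two closed formal subschemes $U\times_S\mcZ$ and $Z_U$ of $\mcM_U$. The first is the pullback of $\mcZ\hookrightarrow\mcM_S$ along $\mcM_U\hookrightarrow\mcM_S$, hence closed in $\mcM_U$; the second is closed by Example \ref{ex closed ideals}(1) applied with $U$ in place of $S$. Their fibers coincide over every $s\in U$: on the one hand $(U\times_S\mcZ)_s=\mcZ_s=Z$ by the choice of $U$, on the other $(Z_U)_s=Z$ by the construction of the product. Feeding the two resulting inclusions of fibers into Proposition \ref{prop fib crit}(1) gives $U\times_S\mcZ\subseteq Z_U$ and $Z_U\subseteq U\times_S\mcZ$, whence $U\times_S\mcZ=Z_U$.

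Letting $s_0$ vary over $S$ then produces a clopen covering $S=\bigcup_i U_i$ together with closed subschemes $Z_i:=\mcZ_{s_0^{(i)}}\subseteq\mcM$ such that $U_i\times_S\mcZ=(Z_i)_{U_i}$; as each $Z_i$ is a scheme by hypothesis, this is exactly the assertion that $\mcZ$ is locally constant (and, by Proposition \ref{prop fib crit}(2), that $\mcZ$ is a scheme). For the concluding remark about (Z3), one only has to note that if in addition every $\mcZ_s$ is proper over $\Spec W$ then so is each $Z_i=\mcZ_{s_0^{(i)}}$, while the converse implication is immediate because base change preserves properness and the fibers of a locally constant family form a visibly locally constant function.

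I do not expect any genuine obstacle: all the substantive work is already contained in Proposition \ref{prop fib crit}, and the corollary merely packages its part (1) together with the observation that a locally constant fiber function supplies the local models $Z_i$. The only points requiring a little care are checking that clopen neighborhoods suffice, so that the earlier results really apply to $\mcM_U$, and the (immediate) computation that the fibers of a product $Z_U$ are the constant family $Z$.
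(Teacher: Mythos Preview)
Your proposal is correct and follows essentially the same route as the paper: localize on $S$ so that the fiber $Z=\mcZ_{s_0}$ is constant, then use Proposition~\ref{prop fib crit}(1) in both directions to identify $\mcZ$ with $Z_S$ over that neighborhood. The only cosmetic difference is that the paper first invokes Proposition~\ref{prop fib crit}(2) to note that $\mcZ$ is a scheme before shrinking $S$, whereas you mention this only in passing at the end; neither ordering affects the argument.
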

\begin{proof}
By Proposition \ref{prop fib crit} (2), $\mcZ$ is itself a scheme. Shrinking $S$, we may assume that $Z:=\mcZ_s$ is constant. Then $\mcZ$ and $Z_S$ have the same fibers and thus agree by (1) of the same proposition.
\end{proof}

\section{Intersection numbers in families}
The aim of this section is to prove the local constancy of intersection numbers in profinite families. Let $W$ be a complete DVR and $f\colon \mcM→\Spf W$ a locally noetherian formal scheme.
\begin{defn}
Let $K\in D(\mcM)$ be a perfect complex such that all $H^j(K)$ are supported on some closed formal subscheme $Z\subseteq \mcM$ that is a proper scheme over $\Spec W$. Then we define the \emph{Euler-Poincaré characteristic} of $K$ as
$$χ(K):= \sum_{i\in \mbZ} (-1)^i\len_W R^if_*K.$$
\end{defn}
Let $S$ denote a profinite set and let $f_S\colon\mcM_S→(\Spf W)_S$ be the induced map. We denote the fiber over $s\in S$ by $\mcM_s\hookrightarrow\mcM_S$. For a perfect complex $K\in D(\mcM_S)$, we set $K_s := \mcO_{\mcM_s}\tensor^\mbL_{\mcO_{\mcM_S}}K$.
\begin{prop}\label{propLocconst}
Let $K\in D(\mcM_S)$ be a perfect complex such that all $H^j(K)$ are supported on $Z_S$ for some closed formal subscheme $Z\subseteq \mcM$ that is a proper scheme over $\Spec W$. Then the fiber-wise Euler-Poincaré characteristic $s\mapsto χ(K_s)$ is locally constant.
\end{prop}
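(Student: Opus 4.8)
The strategy is to reduce the statement to a local, affine question and there exploit that the relevant rings $C(S,A)/I^n$ are coherent (Corollary \ref{corCoh}), so that the cohomology sheaves $H^j(K)$ become finitely presented modules over a ring of locally constant functions; finitely presented modules over such rings are automatically ``locally constant'' over $S$. First I would fix $s_0\in S$ and aim to prove constancy of $\chi(K_s)$ in a neighbourhood. Since $Z$ is quasi-compact (being proper over $\Spec W$) and $\mcM$ is locally noetherian, I can cover $Z$ by finitely many open affines $\Spf A_k\subseteq \mcM$; shrinking $S$ around $s_0$ we may work over one such $\Spf A = \Spf A_k$ at a time, and since $Z$ is contained in $V(I^n)$ for some fixed $n$ (because it is a genuine scheme, cf. Proposition \ref{prop fib crit}), all the modules in sight are killed by $I^n$. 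So the data becomes: a perfect complex $K$ of $C(S,A)$-modules whose cohomology is annihilated by $I^nC(S,A)$, i.e. a perfect complex of modules over $C(S,A)/I^n C(S,A) = LC(S, A/I^n)$ by Corollary \ref{corCoh}.

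Next I would use the coherence of $LC(S,A/I^n)$ together with the fact that a perfect complex has finitely presented cohomology (over a coherent ring this is automatic from being represented by a bounded complex of finite free modules, using \cite[\href{https://stacks.math.columbia.edu/tag/05CV}{Tag 05CV}]{Stacks}). Each finitely presented $LC(S,A/I^n)$-module $H^j(K)$ arises, by the description $LC(S,A/I^n) = \colim_i LC(S_i, A/I^n) = \colim_i \prod_{s\in S_i}A/I^n$ and the fact that finite presentations descend along filtered colimits, as the base change of a finitely presented $\prod_{s\in S_i}A/I^n$-module for some $i$. But a module over a finite product $\prod_{s\in S_i} A/I^n$ is just a tuple of $A/I^n$-modules indexed by $S_i$, so after shrinking $S$ to one of the finitely many clopen pieces $U$ lying over a single point of $S_i$, every $H^j(K)$ becomes the constant family $M_j \otimes_{A/I^n} LC(U,A/I^n)$ for a fixed finitely generated $A/I^n$-module $M_j$. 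In other words, on a small enough clopen $U\ni s_0$ the whole perfect complex $K|_U$, or at least all of its cohomology, is pulled back from $\mcM$: there is a perfect complex $K_0\in D(\mcM)$ with $H^j(K)|_{\mcM_U} \cong H^j(K_0)_U$ compatibly.

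Having trivialised the cohomology sheaves over a small $U$, I would finish by computing $\chi(K_s)$. For $s\in U$ the fiber $K_s = \mcO_{\mcM_s}\tensor^\mbL_{\mcO_{\mcM_S}} K$ has cohomology computed by a hyper-Tor spectral sequence whose $E_2$-page involves $\Tor^{LC(U,A/I^n)}_*(A/I^n, H^j(K))$; using the product decomposition $LC(U,A/I^n)=A/I^n$-algebra via evaluation being obtained by base change from $\prod A/I^n \to A/I^n$ (a projection, hence flat, or more simply by Lemma \ref{lemFlat} (2) the evaluation map is flat after we are over a scheme), these Tor groups and their ranks do not depend on $s\in U$. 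Hence each $R^if_{U*}K_s$, computed via the proper (indeed finite-type, proper over $\Spec W$) pushforward of a complex which up to the constant identification does not vary with $s$, has $W$-length independent of $s$, and the alternating sum $\chi(K_s)$ is constant on $U$. Since $s_0$ was arbitrary, $s\mapsto \chi(K_s)$ is locally constant.

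The main obstacle I expect is the bookkeeping in the reduction step: one must be careful that ``locally constant over $S$'' holds simultaneously for all (finitely many nonzero) cohomology sheaves $H^j(K)$ and compatibly with the maps between the truncations of $K$, so that one really gets a single perfect complex $K_0$ on $\mcM$ pulled back on a common clopen $U$, rather than just a term-by-term statement; and one must check that shrinking $S$ does not interfere with the covering of $Z$ by affines. The coherence input (Corollary \ref{corCoh}) and the elementary structure of modules over a finite product of rings are exactly what make this bookkeeping go through, and the properness of $Z$ over $\Spec W$ guarantees that after the reduction only finitely many affines and one fixed power $I^n$ are involved.
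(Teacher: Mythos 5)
The central gap is in your reduction step: having observed that the cohomology sheaves $H^j(K)$ are killed by $I^nC(S,A)$, you assert that $K$ therefore ``becomes'' a perfect complex of modules over $C(S,A)/I^nC(S,A)=LC(S,A/I^n)$. This is false. $K$ is a perfect complex over $B:=C(S,A)$, and the hypothesis only says that its \emph{cohomology} is $I^n$-torsion; a perfect complex over $B$ with cohomology supported on $V(I^n)$ is not a perfect complex over $B/I^nB$. (Compare the perfect $\mbZ$-complex $[\mbZ\os{2}{\to}\mbZ]$, whose cohomology is killed by $2$ but which is not a complex of $\mbZ/2$-modules.) Without this identification you cannot apply the coherence input the way you intend: $B$ itself is not coherent, and while a bounded complex of coherent modules over a coherent ring has coherent cohomology (Lemma \ref{lemCohomComplex}), a perfect complex over a non-coherent ring need not even have finitely generated cohomology groups, since the kernels of its differentials may fail to be finitely generated. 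So the claim that the $H^j(K)$ are finitely presented $LC(S,A/I^n)$-modules does not follow from what you have written, and this is precisely the nontrivial point the proof must address.

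The paper's argument first passes to the derived reduction $K\tensor^\mbL_B B/J$, which genuinely \emph{is} a perfect complex over the coherent ring $B/J$ and hence has coherent cohomology by Lemma \ref{lemCohomComplex}; it then runs a descending induction on the truncations $\tau^{\leq k-1}K$, using the exact triangle $L\to K\to H^k(K)\to$ and a finite free $A$-resolution of $A/J_0$, to transfer coherence from the derived fibres to the $H^j(K)$ themselves. This truncation induction is the crux of the proof and is entirely absent from your proposal. Once finite presentation of the $H^j(K)$ over $\mcO_{Z_S}$ is in hand, the remainder of your plan --- descent to a finite quotient $S\to S_0$, passing to a constant family on a clopen neighbourhood of $s_0$, and computing $\chi(K_s)$ via a spectral sequence together with flatness of the evaluation maps $LC(S,W/\pi^n)\to W/\pi^n$ --- is in the right spirit and tracks the paper's argument closely. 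You should also be a little more careful in the last step: you do not actually need to produce a single perfect complex $K_0\in D(\mcM)$ pulling back to $K$; it suffices, as in the paper, that each cohomology sheaf is pulled back, since the cohomology-to-hypercohomology spectral sequence for $Rf_{S,*}K$ then yields both finite presentation of $R^{i+j}f_{S,*}K$ and the base change isomorphism $(Rf_{S,*}K)_s\iso Rf_*(K_s)$.
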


\emph{Proof.} We begin by showing that all $H^j(K)$ are finitely presented $\mcO_{Z_S}$-modules. This is a local property, so we may assume the following affine setting. Let $\mcM =\Spf A$, $Z=\Spec A/J_0$ with $J_0$ open, $B=C(S,A)$, $J = BJ_0$ and $K$ a perfect complex of $B$-modules with $JH^j(K)=0$ for all $j$.

The first observation is that the cohomology groups of $K\tensor^{\mbL}_B B/J$ are finitely presented over $B/J$, even coherent. Indeed, $B/J$ is a coherent ring by Corollary \ref{corCoh} and $K\tensor^{\mbL}_B B/J$ a perfect complex of $B/J$-modules, so the next lemma applies:
\begin{lem}\label{lemCohomComplex}
Let $R$ be a ring and $L^\bullet$ a complex of coherent $R$-modules. Then all $H^j(L^\bullet)$ are coherent.
\end{lem}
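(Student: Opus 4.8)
The plan is to reduce the statement to the standard fact that, over an arbitrary ring $R$, the full subcategory of coherent $R$-modules is an abelian subcategory of all $R$-modules, closed under kernels, cokernels and extensions taken in $R\text{-Mod}$ (equivalently, a weak Serre subcategory); in particular the kernel and cokernel of any morphism of coherent modules is again coherent. This is \cite[\href{https://stacks.math.columbia.edu/tag/05CW}{Tag 05CW}]{Stacks}. Granting it, I would argue as follows. Fix $j$ and write $d^{j-1}\colon L^{j-1}\to L^j$ and $d^j\colon L^j\to L^{j+1}$ for the differentials. Since $L^{j-1}$, $L^j$ and $L^{j+1}$ are coherent, the submodule $Z^j:=\ker(d^j)\subseteq L^j$ is coherent, and $\ker(d^{j-1})$ is coherent, so its image $B^j:=\operatorname{Im}(d^{j-1})\cong L^{j-1}/\ker(d^{j-1})$ is coherent as a cokernel of a morphism of coherent modules. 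Finally $B^j\subseteq Z^j$ because $d^jd^{j-1}=0$, and $H^j(L^\bullet)=Z^j/B^j$ is the cokernel of the inclusion $B^j\hookrightarrow Z^j$ of coherent modules, hence coherent. Note that no boundedness of $L^\bullet$ is needed: each $H^j$ only involves the three consecutive terms $L^{j-1},L^j,L^{j+1}$.

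The one point to be careful about, and essentially the only place the hypothesis is really used, is that an arbitrary submodule of a coherent module need not be coherent — it must at least be finitely generated — so one cannot simply say that $H^j(L^\bullet)$ is ``a subquotient of the coherent module $L^j$''. What rescues the argument is that $Z^j$ appears as a \emph{kernel} of a map between coherent modules, which forces it to be of finite type and hence coherent; this is exactly the content of the weak Serre property, and it crucially uses coherence of $L^{j+1}$, not merely of $L^j$.

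If one prefers not to invoke \cite[\href{https://stacks.math.columbia.edu/tag/05CW}{Tag 05CW}]{Stacks} as a black box, the fallback is to reprove the two special cases from the definition (a module is coherent iff it is of finite type and the kernel of every homomorphism $R^{\oplus n}\to M$ is of finite type): one checks directly that the kernel of a map between coherent modules is of finite type, that a finitely generated submodule of a coherent module is coherent, and that the quotient of a coherent module by a coherent submodule is coherent, and then runs the same three-step argument. I expect the cited lemma to be the cleanest route, so there is no genuine obstacle here — the whole content is the bookkeeping observation that $Z^j$ must be taken as a kernel inside $L^j$ rather than treated as a naive subquotient.
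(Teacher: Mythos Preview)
Your proof is correct and follows essentially the same route as the paper's: both show that $\ker(d^j)$ and $\Im(d^{j-1})$ are coherent and then conclude for the quotient $H^j$. The only cosmetic difference is packaging: you invoke \cite[\href{https://stacks.math.columbia.edu/tag/05CW}{Tag 05CW}]{Stacks} for the weak Serre property, whereas the paper unpacks exactly those ingredients by hand (first $\Im(d^j)$ is coherent as a finitely generated submodule of the coherent $L^{j+1}$, then $\ker(d^j)$ is finitely generated as the kernel of a surjection of finitely presented modules, hence coherent as a submodule of $L^j$), which is precisely the ``fallback'' you sketch.
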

\begin{proof}
We denote the differentials on $L^\bullet$ by $d^\bullet$ and consider the short exact sequences
$$0→\ker(d^j)→L^j→\Im(d^j)→0 \ \ \ \mr{and}$$
$$0→\Im(d^{j-1})→\ker(d^j)→H^j(L^\bullet)→0.$$
Then $\Im(d^j)$ is a coherent $R$-module since it is a finitely generated submodule of the coherent module $L^{j+1}$. It follows that $\ker(d^j)$ is finitely generated since it is the kernel of a surjection of finitely presented modules. As $\ker(d^j)$ is also a submodule of $L^j$, which is coherent, it is itself coherent. The second exact sequence then implies that $H^j(L^\bullet)$ is coherent as well.
\end{proof}

To show that the $H^j(K)$ are themselves coherent $B/J$-modules we employ an inductive argument. It applies to any $K\in D^-(B)$ such that (1) $JH^j(K) = 0$ for all $j$ and such that (2) $K\tensor_B^\mbL B/J \in D^-_{\mr{coh}}(B/J)$ is representable by a bounded-above complex of coherent $B/J$-modules.

Take $k$ such that $H^i(K) = 0$ for $i > k$. Then the ``top'' cohomology
$$H^{k}(K) \overset{\text{use }(1)}{=} H^{k}(K)\tensor_BB/J \iso H^{k}(K\tensor^{\mbL}_BB/J)$$
is a coherent $B/J$-module by (2). The truncation $L := τ^{\leq k-1}K$ has the property that $H^i(L) = 0$ for $i\geq k$ and $H^i(L) \iso H^i(K)$ for $i \leq k-1$. The induction is now achieved by replacing $K$ by $L$ and $k$ by $k-1$, given we can verify (1) and (2) for $L$. From these (1) is clear. For (2) consider the exact triangle
$$L → K → H^{k}(K) →.$$
It gives rise to an exact triangle
$$L\tensor_B^\mbL B/J → K \tensor_B^\mbL B/J → H^k(K) \tensor_B^\mbL B/J →.$$
Recall that $J = BJ_0$. Since $A$ is noetherian, there is a resolution by finite free $A$-modules $P^\bullet → A/J_0$. Since $A\to B$ is flat, $B\tensor_AP^\bullet → B/J$ is a resolution by finite free $B$-modules. Applying Lemma \ref{lemCohomComplex} to $H^k(K)\tensor_A P^\bullet$ then shows that the rightmost term $H^k(K)\tensor_B^\mbL B/J$ lies in $D^-_{\mr{coh}}(B/J)$. The same is true for the middle term implying $L\tensor_B^\mbL B/J\in D^-_{\mr{coh}}(B/J)$, i.e. (2), concluding the induction.

Returning to the original global setting, it is now established that all $H^j(K)$ are finitely presented $\mcO_{Z_S}$-modules. It follows that there exists a projection to a finite set $S→S_0$ such that all $H^j(K)$ are obtained by pullback from $Z_{S_0}$. Replacing $S$ by its fibers over $S_0$, we may assume $H^j(K) = \mcO_{Z_S}\tensor_{\mcO_Z}\mcF^j$ for certain coherent $\mcO_Z$-modules $\mcF^j$. As $Z_S$ is separated, $Rf_{S,*}H^j(K)$ may be computed with Čech cohomology from any affine cover. In particular, we are free to choose an affine open cover of $Z_S$ of the form $\mcU_S = \{U_{i,S}\}$ for $\mcU =\{U_i\}$ an open affine cover of $Z$. Then, for the corresponding Čech complexes,
$$\check\mcC^\bullet(\mcU_S, H^j(K)) = LC(S,W/π^n)\tensor_{W/π^n} \check\mcC^\bullet(\mcU, \mcF^j),$$
where $π\in W$ is a uniformizer and $n$ is such that $π^n\mcO_Z = 0$.
The flatness of $W/π^n→LC(S,W/π^n)$ from Lemma \ref{lemFlat} (2) then implies that $R^if_{S,*}H^j(K) = LC(S,W/π^n)\tensor_{W/π^n}R^if_*\mcF^j$. Moreover, the flatness of the evaluation maps $LC(S,W/π^n)→W/π^n,φ\mapsto φ(s)$ implies the base change isomorphism
$$(Rf_{S,*}H^j(K))_s \iso Rf_*(H^j(K)_s)=Rf_*\mcF^j.$$
We now consider the cohomology-to-hypercohomology spectral sequence that computes $R^{i+j}f_{S,*}K$ from all $R^if_{S,*}H^j(K)$. Its existence implies both that all $R^{i+j}f_{S,*}K$ are finitely presented $LC(S,W/π^n)$-modules, hence locally constant over $S$, and that base change at the level of complexes holds,
\begin{equation}\label{eqBCFiber}
(Rf_{S,*}K)_s \iso Rf_*(K_s).
\end{equation}
The local constancy of the Euler-Poincaré characteristic follows.
\qed

\begin{thm}\label{thmLocconst}
Let $\mcZ_1,\ldots,\mcZ_r\subseteq \mcM_S$ be closed formal subschemes that satisfy (Z1), (Z2) and (Z3) from the introduction. Then the fiber intersection number
$$\Int\colon S → \mbZ,\ s\mapsto χ\big(\mcO_{\mcZ_{1,s}}\tensor^\mbL_{\mcO_\mcM}\cdots\tensor^\mbL_{\mcO_\mcM}\mcO_{\mcZ_{r,s}}\big).$$
is locally constant on $S$.
\end{thm}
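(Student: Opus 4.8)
The plan is to reduce Theorem \ref{thmLocconst} to Proposition \ref{propLocconst} by comparing the ``family-side'' complex
$$K := \mcO_{\mcZ_1}\tensor^\mbL_{\mcO_{\mcM_S}}\cdots\tensor^\mbL_{\mcO_{\mcM_S}}\mcO_{\mcZ_r}$$
with the fibre-wise complexes that define $\Int(s)$. First I would observe that $K$ is a perfect complex by (Z1), since the derived tensor product of perfect complexes is perfect, and that its cohomology sheaves are supported on $\mcZ = \mcZ_1\cap\ldots\cap\mcZ_r$. By (Z3) and Corollary \ref{cor fib crit}, after passing to a finite partition of $S$ we may assume $\mcZ = Z_S$ for a single scheme $Z\subseteq\mcM$ that is proper over $\Spec W$. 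Hence Proposition \ref{propLocconst} applies and tells us that $s\mapsto χ(K_s)$ is locally constant. It therefore suffices to prove the identity $χ(K_s) = \Int(s)$ for each $s$, i.e. that
$$K_s \;=\; \mcO_{\mcM_s}\tensor^\mbL_{\mcO_{\mcM_S}}\!\big(\mcO_{\mcZ_1}\tensor^\mbL\cdots\tensor^\mbL\mcO_{\mcZ_r}\big)$$
agrees in $D(\mcM_s) = D(\mcM)$ with $\mcO_{\mcZ_{1,s}}\tensor^\mbL_{\mcO_\mcM}\cdots\tensor^\mbL_{\mcO_\mcM}\mcO_{\mcZ_{r,s}}$.

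The comparison is where (Z2) enters, and it is the technical heart of the argument. The point is that $K_s$ is computed by first tensoring the $\mcZ_i$ together \emph{over} $\mcO_{\mcM_S}$ and then derived-restricting to the fibre, whereas $\Int(s)$ is built from the fibres $\mcZ_{i,s}$, whose structure sheaves are the \emph{underived} pullbacks $\mcO_{\mcM_s}\tensor_{\mcO_{\mcM_S}}\mcO_{\mcZ_i}$. The key local computation is the following: working affinely with $\mcM = \Spf A$, $B = C(S,A)$, and writing $\mcO_{\mcM_s}$ as $B\to A$ via the evaluation map $\mathrm{ev}_s$, condition (Z2) says exactly that $\mcO_{\mcZ_i}\tensor^\mbL_B A$ is concentrated in degree $0$, i.e. $\mcO_{\mcZ_i}\tensor^\mbL_B A = \mcO_{\mcZ_{i,s}}$. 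Once this is known, I would use associativity and commutativity of the derived tensor product to rewrite
$$K_s \;=\; \mcO_{\mcZ_1}\tensor^\mbL_B\cdots\tensor^\mbL_B\mcO_{\mcZ_r}\tensor^\mbL_B A$$
and then move the $\tensor^\mbL_B A$ inside, step by step, across each factor. The cleanest way is a short induction on $r$: granting $\mcO_{\mcZ_r}\tensor^\mbL_B A = \mcO_{\mcZ_{r,s}}$ and using flatness of $\mcO_{\mcM_S}\to\mcO_{\mcZ_i}$ where available — or, more robustly, just base-changing the whole expression and invoking (Z2) for each $i$ in turn — one identifies $K_s$ with the iterated derived tensor product of the $\mcO_{\mcZ_{i,s}}$ over $\mcO_{\mcM_s} = \mcO_\mcM$. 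One must be a little careful that the derived tensor products on the fibre are taken over $\mcO_\mcM$ and not over $\mcO_{\mcM_S}$; this is fine because $\mcO_{\mcM_s} = \mcO_\mcM$ as a ring and the fibre cycles $\mcZ_{i,s}$ are $\mcO_\mcM$-modules, so the base-change/associativity formula $(-)\tensor^\mbL_B A \tensor^\mbL_A (-) = (-)\tensor^\mbL_B(-) \tensor^\mbL_B A$ does the bookkeeping.

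The main obstacle I anticipate is not conceptual but organisational: making the multi-factor derived base-change argument precise without drowning in iterated $\tensor^\mbL$'s, and in particular verifying that (Z2) — which is stated only for a single $\mcZ_i$ against a single fibre $\mcM_s$ — genuinely suffices to kill all higher Tor's that appear when one commutes $\tensor^\mbL_B A$ past an intermediate product $\mcO_{\mcZ_1}\tensor^\mbL_B\cdots\tensor^\mbL_B\mcO_{\mcZ_{k}}$. The resolution is that one never needs to restrict such an intermediate product to the fibre directly: by associativity one always applies $\tensor^\mbL_B A$ to one honest structure sheaf $\mcO_{\mcZ_i}$ at a time (the others being inert under $\tensor^\mbL_A$ once they already live over $A$), so (Z2) is invoked exactly $r$ times, once per cycle. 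Assembling these identifications, and using \eqref{eqBCFiber} from Proposition \ref{propLocconst} to identify $Rf_*$ on the fibre, yields $χ(K_s) = \Int(s)$ for all $s$, and the local constancy of $\Int$ follows from that of $χ(K_s)$. This completes the proof. \qed
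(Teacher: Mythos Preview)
Your proposal is correct and follows essentially the same route as the paper: form $K$, note it is perfect with support on $Z_S$ after shrinking $S$ via (Z3), apply Proposition~\ref{propLocconst} to get local constancy of $χ(K_s)$, and then identify $K_s$ with the fibre-wise derived tensor product using (Z2). The paper dispatches the last step in one line by citing the formal associativity identity (Stacks \href{https://stacks.math.columbia.edu/tag/08YU}{08YU}) to distribute $-\tensor^\mbL_{\mcO_{\mcM_S}}\mcO_{\mcM_s}$ across the factors and then replacing each $\mcO_{\mcZ_i}\tensor^\mbL_{\mcO_{\mcM_S}}\mcO_{\mcM_s}$ by $\mcO_{\mcZ_{i,s}}$ via (Z2); your induction on $r$ is just an unpacking of that same identity, so no separate appeal to \eqref{eqBCFiber} is needed once Proposition~\ref{propLocconst} has been invoked.
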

\begin{proof}
Shrinking $S$ and by (Z3), we can assume $\mcZ_1\cap\ldots\cap \mcZ_r = Z_S$ for some scheme $Z\subseteq \mcM$ that is proper over $\Spec W$. We set
$$K := \mcO_{\mcZ_1}\tensor^\mbL_{\mcO_{\mcM_S}} \cdots \tensor^\mbL_{\mcO_{\mcM_S}} \mcO_{\mcZ_r},$$
which is a perfect complex by (Z1). The $H^j(K)$ have support on $Z_S$ and it follows that $s\mapsto χ(K_s)$ is locally constant by Proposition \ref{propLocconst}. All that is left to show is that
$$K_s \iso \mcO_{\mcZ_{1,s}}\tensor_{\mcO_{\mcM_s}}^\mbL\cdots\tensor^\mbL_{\mcO_{\mcM_s}}\mcO_{\mcZ_{r,s}}.$$
But it is formal that
$$K_s \iso \Big(\big(\mcO_{\mcZ_1}\tensor^\mbL_{\mcO_{\mcM_S}} \mcO_{\mcM_s}\big)\tensor^\mbL_{\mcO_{\mcM_s}}\cdots \tensor^\mbL_{\mcO_{\mcM_s}}\big(\mcO_{\mcZ_r}\tensor^\mbL_{\mcO_{\mcM_S}} \mcO_{\mcM_s}\big)\Big).$$
This follows e.g. from \cite[\href{https://stacks.math.columbia.edu/tag/08YU}{Tag 08YU}]{Stacks} or by a direct verification after choosing free resolutions of the $\mcO_{\mcZ_i}$ locally. Using the flatness assumption (Z2), we see that actually
$$\big(\mcO_{\mcZ_i}\tensor^\mbL_{\mcO_{\mcM_S}} \mcO_{\mcM_s}\big)\iso \big(\mcO_{\mcZ_i}\tensor_{\mcO_{\mcM_S}} \mcO_{\mcM_s}\big)$$
and the proof is complete.
\end{proof}

\begin{ex}\label{exZ2}
Assume that $\mcZ\subseteq \Spf C(S,A)$ is defined by the closed ideal $J$ and that $J = (f_1,\ldots,f_r)$ is generated by a regular sequence. Then $\mcO_\mcZ$ is quasi-isomorphic to the Koszul complex $K$ of the $f_1,\ldots,f_r$. Its fiber $K_s$ in $s$ is the Koszul complex of $f_1(s),\ldots,f_r(s)$. So (Z2) holds for $\mcZ$ if and only if $K_s$ is quasi-isomorphic to $\mcO_{\mcZ_s}$ for all $s$, i.e. if the $f_1(s),\ldots,f_r(s)$ again form a regular sequence for all $s$. For example, (Z2) holds in the situation of Example \ref{ex closed ideals} (3).
\end{ex}

\section{Application to the AFL}
\label{sectAFL}
Recall that there are two variants of the AFL: For the \emph{group version}, a ``diagonal'' cycle is intersected with a translate; for the \emph{semi-Lie algebra} version, a diagonal cycle is intersected with a translate \emph{and} a so-called KR-divisor. It is the local constancy in the slightly more complicated semi-Lie algebra setting that is needed in \cite{Z_proof_AFL} and this is the main result of this last section. The local constancy for the group version will be a special case, to be found at the very end.

For $p$ an odd prime, let $F/F_0$ be an unramified quadratic extension of $p$-adic local fields with Galois conjugation $σ$ and let $\breve F$ be the completion of a maximal unramified extension of $F$. We denote the corresponding rings of integers by $\mcO_{F_0}\subset \mcO_F\subset \mcO_{\breve F}$ and denote the residue field of $\breve F$ by $\mbF$. The following definitions are taken directly from \cite[§3.1]{Z_proof_AFL}.

(1) A \emph{hermitian $\mcO_F$-module over $T$}, where $T$ is an $\Spf \mcO_{\breve F}$-scheme, is a triple $(X,ι,λ)$ of the following kind. $X$ is a supersingular strict formal $\mcO_{F_0}$-module over $T$, $ι\colon \mcO_F→\End(X)$ an $\mcO_F$-action and $λ\colon X\iso X^\vee$ a principal polarization such that $λ\circ ι(a) = ι(σ a)^\vee\circ λ$. The $\mcO_F$-action yields a decomposition $\Lie X = L_0\oplus L_1$, where $L_0$ (resp. $L_1$) is the eigenspace on which $\mcO_F$ acts naturally (resp. via composition with $σ$). The locally constant pair $(\mathrm{rk}_{\mcO_T} L_0,\mathrm{rk}_{\mcO_T}L_1)$ is called the \emph{signature} of $(X,ι,λ)$.

(2) For each $n$, we fix a hermitian $\mcO_F$-module $\mbX_n = (\mbX_n,ι_{\mbX_n},λ_{\mbX_n})$ of signature $(1,n-1)$, the so-called \emph{framing object}. We also fix hermitian $\mcO_F$-modules $\mbE$ and $\overbar \mbE$ of signatures $(1,0)$ and $(0,1)$, respectively, over $\mbF$. Then there is an isogeny $\mbX_n→\mbE\times\overbar{\mbE}^{n-1}$ and
$$V_n := \Hom^0_F(\overbar\mbE,\mbX_n)$$
is an $n$-dimensional hermitian $F$-vector space, the space of so-called \emph{special homomorphisms}. The group of self-quasi-isogenies $G_n:=\Aut(\mbX_n,ι_{\mbX_n},λ_{\mbX_n})$ acts unitarily on $V_n$ and identifies $G_n$ with $U(V_n)(F_0)$.

(3) We let $\mcN_n$ denote the formal scheme over $\Spf \mcO_{\breve F}$ whose $T$-valued points are quadruples $(X,ι,λ,ρ)$ up to isomorphism, where $(X,ι,λ)$ is a hermitian $\mcO_F$-module over $T$ and $ρ$ a quasi-isogeny of height $0$ to the framing object over the special fiber $\overbar T := \mbF\tensor_{\mcO_{\breve F}} T$,
$$ρ\colon (X,ι,λ)\times_T\overbar{T}→(\mbX_n,ι_{\mbX_n},λ_{\mbX_n})\times_{\Spec \mbF} \overbar{T}.$$
It is locally formally of finite type and formally smooth of relative dimension $n-1$ over $\Spf \mcO_{\breve F}$. We henceforth fix some $n\geq 2$ and drop the index ``$n$''.

(4) Let $\overbar\mcE$ be the unique deformation of $\overbar \mbE$ over $\Spf \mcO_{\breve F}$. For $u\in V$, we denote by $\mcZ(u)\subseteq \mcN$ the closed formal subscheme of points $(X,ι,λ,ρ)$ with the property that the quasi-homomorphism $ρ^{-1}u\colon \overbar{\mbE}\times_{\Spec \mbF} \overbar T →X\times_T\overbar T$ lifts to a homomorphism $\overbar \mcE\times_{\Spf \mcO_{\breve F}}T→X$. By \cite[Proposition 3.5]{KR_local}, this is a relative Cartier divisor over $\Spf \mcO_{\breve F}$ if $u\neq 0$, the so-called \emph{KR-divisor} of $u$.

(5) A pair $(g,u)\in G\times V$ is \emph{regular semi-simple} if $\{g^iu\}_{i\geq 0}$ generates $V$ as $F$-vector space. For regular semi-simple $(g,u)$, we consider the following intersection number from \cite[Equation (3.9)]{Z_proof_AFL},
\begin{equation}\label{eqAFL}
\Int(g,u):=χ\big(\mcO_Δ\tensor_{\mcO_{\mcM}}^\mbL \mcO_{(\id\times g)Δ}\tensor_{\mcO_\mcM}^\mbL \mcO_{\mcZ(u)\times_{\Spf \mcO_{\breve F}} \mcN}\big).
\end{equation}
Here $Δ\subset \mcM:=\mcN\times_{\Spf \mcO_{\breve F}}\mcN$ is the diagonal.
Finiteness of this number is implied by two properties. First, $\mcO_Δ$ and $\mcO_{\mcZ(u)}$ are perfect complexes. This follows immediately from the regularity of $\mcM$. Second, the schematic intersection $Δ\cap (\id\times g)Δ\cap (\mcZ(u)\times_{\Spf \mcO_{\breve F}} \mcN)$ is a proper scheme over $\Spec \mcO_{\breve F}$, see e.g. \cite[Lemma 6.1]{M_rel_unit_RZ}.

\begin{thm}\label{thmAFL}
The intersection number $\Int(g,u)$ is a locally constant function on the regular semi-simple elements $(G\times V)_\mr{rs}$.
\end{thm}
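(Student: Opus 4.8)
The plan is to deduce Theorem~\ref{thmAFL} from the general local constancy result, Theorem~\ref{thmLocconst}, by exhibiting the AFL intersection problem as a profinite family satisfying (Z1), (Z2), (Z3). So first I would fix a regular semi-simple $(g_0,u_0)$ and show that $(G\times V)_{\mr{rs}}$ admits a neighborhood basis of the form $S = S_G\times S_V$ with $S_G\subseteq G$, $S_V\subseteq V$ compact open (hence profinite). Over such an $S$ one has the family $\mcM_S = (\mcN\times_{\Spf\mcO_{\breve F}}\mcN)_S$ together with three families of closed formal subschemes: the constant family $\Delta_S$; the family $\mcZ_1$ whose fiber over $(g,u)$ is $(\id\times g)\Delta$, obtained by applying to the constant family $\Delta_S$ the automorphism of $\mcM_S$ induced by the $S$-point of $G$ given by $g\mapsto g$ (note $G$ acts on $\mcN$ by functoriality, changing the framing $\rho$); and the family $\mcZ_2$ whose fiber over $(g,u)$ is $\mcZ(u)\times_{\Spf\mcO_{\breve F}}\mcN$, which one gets from the universal KR-divisor. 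The only subtlety in defining $\mcZ_2$ is that $\mcZ(u)$ must vary in an \emph{algebraic} family over the compact open $S_V\subseteq V$; this is exactly the content of the Kudla--Rapoport construction, as the incidence condition ``$\rho^{-1}u$ lifts'' makes sense for $u$ ranging over a lattice and is cut out by a single equation (the norm form / the relevant section of a line bundle), so $\mcZ_2$ is a genuine closed formal subscheme of $\mcM_{S_V}$, a relative Cartier divisor over $(\Spf\mcO_{\breve F})_{S_V}$ away from $u=0$. (This is where I would cite \cite[Proposition 3.5]{KR_local} and, if needed, Proposition~\ref{propKR} referenced in the acknowledgments.)

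Next I would verify the three conditions. For (Z1): $\mcO_{\Delta_S}$, $\mcO_{\mcZ_1}$ and $\mcO_{\mcZ_2}$ are perfect. The diagonal and its translate are regularly immersed in the regular formal scheme $\mcM$, and $\mcZ(u)$ is a Cartier divisor, so in all three cases the structure sheaf on $\mcM$ is perfect; perfectness is preserved under the flat base change $\mcO_\mcM\to\mcO_{\mcM_S}$, using Proposition~\ref{propAdic}(3) and the fact that pulling back a finite free resolution along a flat map stays a resolution. For (Z2): I would argue fiberwise using Example~\ref{exZ2}. Both $\Delta$ and $(\id\times g)\Delta$ are locally cut out by a regular sequence whose reductions in \emph{every} fiber $\mcM_s$ are again regular (they are the diagonal/translate in the fixed regular scheme $\mcN\times\mcN$, independent of $s$ — for $\mcZ_1$ the equations vary but remain a regular sequence because the fiber is always a translate of $\Delta$); and $\mcZ_2$ is fiberwise a divisor given locally by a single element which is a non-zero-divisor in each fiber precisely because $\mcZ(u)\times\mcN$ is $\mcO_{\breve F}$-flat for each $u\ne 0$, which places us in the situation of Example~\ref{ex closed ideals}(3). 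Hence $\mcTor_j^{\mcO_{\mcM_S}}(\mcO_{\mcZ_i},\mcO_{\mcM_s})=0$ for $j\ge 1$.

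The main obstacle is (Z3): one must show that the scheme-theoretic intersection $\mcZ := \Delta_S\cap\mcZ_1\cap\mcZ_2$ has fibers $\mcZ_{(g,u)}$ which are proper schemes over $\Spec\mcO_{\breve F}$ and which are \emph{locally constant} in $(g,u)$. Properness of the individual fibers is already known — it is exactly the finiteness input cited after \eqref{eqAFL}, cf. \cite[Lemma 6.1]{M_rel_unit_RZ} — so by Proposition~\ref{prop fib crit}(2)--(3) and Corollary~\ref{cor fib crit} it suffices to show the \emph{function} $(g,u)\mapsto\mcZ_{(g,u)}$ is locally constant; this I would isolate as Lemma~\ref{lem loc const afl}. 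The strategy there is that the support of the intersection is a finite set of $\mbF$-points (by properness and the fact that the intersection is $0$-dimensional in the rs locus), and the intersection pattern near each such point is detected by Dieudonné theory / the Grothendieck--Messing crystal: the conditions ``$x\in\Delta$'', ``$x\in(\id\times g)\Delta$'', ``$x\in\mcZ(u)$'' translate into incidence relations among lattices in the rational Dieudonné module, and for $(g,u)$ varying in a sufficiently small compact open these lattices, being functions of $g,u$ through matrices with $p$-adic entries, are literally constant. Thus the whole infinitesimal neighborhood of the intersection is constant on a small enough $S$. Once (Z3) is in hand, Theorem~\ref{thmLocconst} applies verbatim with $r=3$, $(\mcZ_1,\mcZ_2,\mcZ_3)=(\Delta_S,\mcZ_1,\mcZ_2)$, $W=\mcO_{\breve F}$, and yields the local constancy of $(g,u)\mapsto\Int(g,u)=\chi(K_{(g,u)})$, completing the proof.
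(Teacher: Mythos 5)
Your overall plan matches the paper's proof: exhibit the three AFL cycles as a profinite family over a small compact open $S\subseteq(G\times V)_{\mr{rs}}$, verify (Z1), (Z2), (Z3), and invoke Theorem~\ref{thmLocconst}. The treatment of $\Delta_S$ and its translate via flatness of $\mcM_S\to\mcM$ and stability of (Z1)--(Z2) under automorphisms is also what the paper does. The Dieudonné-lattice argument you sketch for (Z3) is the right idea, and corresponds to Lemma~\ref{lem loc const afl}.

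However, there is a genuine gap at the family KR-divisor $\mcZ(u)\subseteq\mcN_S$, which is actually the technical heart of the section. You treat it as essentially automatic (``this is exactly the content of the Kudla--Rapoport construction... cut out by a single equation... a relative Cartier divisor'') and invoke Example~\ref{ex closed ideals}(3) as if a generating section $f\in C(S,A)$ with non-zero-divisor fibers were already in hand. But Kudla--Rapoport's \cite[Proposition 3.5]{KR_local} only gives that each \emph{individual} $\mcZ(u(s))$ is a divisor; it says nothing about whether the family ideal in $C(S,A)$ is closed or principal. This is exactly the content of Proposition~\ref{propKR}, and it is not routine: Example~\ref{ex closed ideals}(2) shows that even the ideal generated by a single non-zero-divisor in $C(S,A)$ need not be closed, so one cannot simply pick local equations fiber by fiber. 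The paper's proof has to first establish uniform Artin--Rees-type estimates (Lemma~\ref{lem1}) and an approximation statement (Lemma~\ref{lem2}) in order to produce a \emph{continuous} local generator $f\in C(S,A)$, and only \emph{then} does Example~\ref{ex closed ideals}(3) apply to conclude closedness. Your argument reverses this order of logic, assuming what needs to be proved. Until Proposition~\ref{propKR} is proved, (Z1) and (Z2) for $\mcZ(u)\times_{(\Spf\mcO_{\breve F})_S}\mcN_S$ are unjustified, and Theorem~\ref{thmLocconst} cannot be applied.

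A secondary, smaller inaccuracy: in your sketch of (Z3) you assert the intersection is ``$0$-dimensional'' and a ``finite set of $\mbF$-points.'' The paper does not need or prove $0$-dimensionality; it reduces quasi-compactness of the family $\mcZ(g,u)$ to local constancy of the $\mbF$-point sets $\mcZ(g,u)_s(\mbF)$ (using that $\mcN_{\mr{red}}$ is locally of finite type over $\mbF$), and then identifies these sets via Dieudonné theory with sets of lattices pinched between $\breve L$ and $\breve L^\vee$, observing that both the sandwich and the $g(s)$-stability condition modulo $\Hom(L^\vee,L)$ are locally constant. Your version of the argument should be phrased at this level of precision rather than through an unproven dimension claim.
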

\emph{Proof.}
We would like to apply Theorem \ref{thmLocconst} from the previous section. So let $S$ be a profinite set and $(g,u)\colon S→(G\times V)_{\mr{rs}}$ a continuous map. We first define the family versions of the three cycles in the definition of $\Int(g,u)$. These will be closed formal subschemes of $\mcM_S$. 

(6) The first cycle is the diagonal in $\mcM_S = \mcN_S\times_{(\Spf \mcO_{\breve F})_S} \mcN_S.$ It agrees with the base change $Δ_S$ as in example \ref{ex closed ideals} (1). Since $\mcM_S→\mcM$ is flat (Proposition \ref{propAdic}), $Δ_S$ satisfies the two conditions (Z1) and (Z2).

(7) The second cycle is defined as the translate $(\mr{id}\times g)Δ_S$. To this end, we now explain how $g$ induces an automorphism of $\mcN_S$: This formal scheme represents the functor which takes an $\mcO_{\breve F}$-algebra $R$ in which $p$ is nilpotent to the set of isomorphism classes of tuples $(X,ι,λ,ρ,t)$ of a point $(X,ι,λ,ρ)\in \mcN(\Spec R)$ and an $\mcO_{\breve F}$-algebra map $t\colon C(S,\mcO_{\breve F})→R$. (Continuity of $t$ is automatic since it factors over some $LC(S,\mcO_{\breve F}/p^n)$.)
The map $g$ defines a quasi-isogeny of $\mbX\times_{\Spec \mbF}LC(S,\mbF)$ which can be described as follows. After multiplying by a power of $p$, we may assume that $g(s)$ is an isogeny for all $s\in S$. On each truncation $\mbX[p^m]$, $g$ then defines a locally constant family of endomorphisms. These are compatible and define $g$ over $LC(S,\mbF)$. Denoting by $\overbar t :=1_{\mbF}\tensor_{\mcO_{\breve F}}t$ the special fiber of $t$, the automorphism of $\mcN_S$ is now given by composition in $ρ$ as usual, $g(X,ι,λ,ρ,t):=(X,ι,λ,(\Spec \overbar t)^*(g)\circ ρ,t)$. 
The cycle $(\mr{id}\times g)Δ_S$ again satisfies (Z1) and (Z2), since both these properties are stable under automorphisms over $(\Spf \mcO_{\breve F})_S$.

(8) We define the third cycle as the product $\mcZ(u)\times_{(\Spf \mcO_{\breve F})_S}\mcN_S$, where $\mcZ(u)\subset \mcN_S$ is the family version of the usual KR-divisors. To define the latter, observe that $u$ defines a quasi-homomorphism
$$\overbar\mbE\times_{\Spec \mbF}\Spec LC(S,\mbF)→\mbX\times_{\Spec \mbF}\Spec LC(S,\mbF)$$
just as $g$ did in (7). By \cite[Proposition 2.9]{RZ_book}, there is a closed formal subscheme $\mcZ(u)\subseteq \mcN_S$ parametrizing those tuples $(X,ι,λ,ρ,t)$ such that $ρ^{-1}(\Spec \overbar t)^*(u)$ lifts to a homomorphism of $p$-divisible groups $\overbar{\mcE}\times_{\Spf \mcO_{\breve F}} R → X$.

\begin{prop}\label{propKR}
Let $u\colon S→V\setminus \{0\}$ be a continuous map. Then $\mcZ(u)\subset \mcN_S$ is a Cartier divisor which is flat over $S$. In particular, $\mcZ(u)\times_{(\Spf \mcO_{\breve F})_S}\mcN_S$ satisfies (Z1) and (Z2).
\end{prop}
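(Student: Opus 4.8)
The plan is to reduce both claims, via Examples~\ref{ex closed ideals}(3) and \ref{exZ2}, to exhibiting $\mcZ(u)$ locally on $\mcN_S$ as the zero locus $V(f)$ of a single function whose fibres are all non-zero divisors. Suppose that over a connected open affine $\Spf A\subseteq\mcN$ we have produced an $f\in C(S,A)$ with $\mcZ(u)\cap(\Spf A)_S=V(f)$ and with $f(s)\in A$ a non-zero divisor for every $s\in S$. Then $(f)$ is closed by Example~\ref{ex closed ideals}(3), so $\mcZ(u)$ is a Cartier divisor; moreover $\mcO_{\mcZ(u)}$ is represented locally by the two-term Koszul complex of $f$, whose derived fibre over $s$ is the Koszul complex of the non-zero divisor $f(s)$ and hence equals $\mcO_{\mcN_s}/(f(s))=\mcO_{\mcZ(u)_s}$ in degree $0$; thus $\mcZ(u)$ is flat over $S$ by Example~\ref{exZ2}. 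For the final assertion, $\mcZ(u)\times_{(\Spf\mcO_{\breve F})_S}\mcN_S\subseteq\mcM_S$ is, over the first-factor neighbourhood $\Spf C(S,A\widehat{\otimes}_{\mcO_{\breve F}}A')$, cut out by the image of $f$, whose fibres are the images of the $f(s)$; these stay non-zero divisors because $A\to A\widehat{\otimes}_{\mcO_{\breve F}}A'$ is flat (a base change of the flat map $\mcO_{\breve F}\to A'$), so $\mcZ(u)\times_{(\Spf\mcO_{\breve F})_S}\mcN_S$ is again, locally, the Koszul complex of one non-zero divisor with non-zero-divisor fibres, whence (Z1) (it is perfect) and (Z2) (Example~\ref{exZ2}).

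To construct $f$ I would use the deformation-theoretic description of the Kudla--Rapoport divisor underlying \cite[Proposition~3.5]{KR_local}: for $u\in V\setminus\{0\}$ the divisor $\mcZ(u)$ is the zero locus of a regular section $\delta_u$ of a line bundle $\mcL$ on $\mcN$ (with $\mcL$ independent of $u$, built from the universal object of $\mcN$ and the fixed $\overbar{\mcE}$), the formation of $\delta_u$ is compatible with base change, and $u\mapsto\delta_u$ is continuous --- the last point because, near a fixed $u\neq 0$, the Grothendieck--Messing obstruction modulo $\pi^n$ depends only on a finite truncation of $u$, exactly as in the truncation argument of (8). Two further observations let one run this over $\mcN_S$: the universal object $(X,\iota,\lambda,\rho)$ over $\mcN_S$ is pulled back along the forgetful morphism $\mcN_S\to\mcN$ (an $R$-point of $\mcN_S$ is an $\mcN$-point together with a map $C(S,\mcO_{\breve F})\to R$), so $\mcL$ over $\mcN_S$ is the pullback of $\mcL$ on $\mcN$; and $\mcZ(u)\subseteq\mcN_S$ --- defined via \cite[Proposition~2.9]{RZ_book} as in (8) --- has fibres $\mcZ(u)_s=\mcZ(u(s))$. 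Hence, over a connected open affine $\Spf A\subseteq\mcN$ on which $\mcL$ is trivial, the continuous map $s\mapsto\delta_{u(s)}$ defines an element $f\in C(S,A)=\Gamma((\Spf A)_S,\mcL)$; by base-change compatibility $\mcZ(u)_s=\mcZ(u(s))=V(\delta_{u(s)})=V(f(s))$, so the closed formal subschemes $\mcZ(u)\cap(\Spf A)_S$ and $V(f)$ of $(\Spf A)_S$ have equal fibres and thus coincide by Proposition~\ref{prop fib crit}(1); and each $f(s)=\delta_{u(s)}$ is a non-zero divisor since it generates the ideal of the effective Cartier divisor $\mcZ(u(s))\subseteq\mcN$.

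The step I expect to be the crux --- and, as the acknowledgments indicate, the point where the first version was incomplete --- is precisely the claim that $\mcZ(u)$ is \emph{locally principal in the family}. A closed formal subscheme of $\mcN_S$ all of whose fibres are principal need not itself be principal (Example~\ref{ex closed ideals}(2)), so one cannot deduce this abstractly from the fibrewise Cartier-divisor property; the content is that the Kudla--Rapoport divisor is not merely fibrewise a hypersurface but is, on affine pieces of $\mcN$, the zero locus of a section of a line bundle that is \emph{constant along $S$}, i.e.\ pulled back from $\mcN$. Making this precise requires unwinding the crystalline definition of $\mcZ(u)$ with care: that the obstruction genuinely lands in a line bundle (where the signatures $(0,1)$ of $\overbar{\mcE}$ and $(1,n-1)$ of $\mbX$ and $\mcO_F$-equivariance enter), that this line bundle lives on $\mcN$ rather than only on $\mcN_S$, and that $\delta_u$ is compatible with both the base change $\mcN_S\to\mcN$ and the restriction to the fibres $\mcN_s$ --- after which continuity of $s\mapsto u(s)$ together with Examples~\ref{ex closed ideals}(3) and \ref{exZ2} finishes the argument.
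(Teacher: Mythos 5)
You correctly identify the reduction: it suffices to exhibit $\mcZ(u)$ locally on $\mcN_S$ as $V(f)$ for $f\in C(S,A)$ whose fibres $f(s)$ are all non-zero divisors, after which Example~\ref{ex closed ideals}(3) gives closedness of $(f)$, the Koszul complex gives (Z1), and Example~\ref{exZ2} gives (Z2). You also correctly diagnose that the crux is local principality \emph{in the family}, which does not follow abstractly from fibrewise Cartier-ness (Example~\ref{ex closed ideals}(2)), and that this is where the original version of the proof was deficient.

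However, the construction of $f$ is exactly where your argument has a genuine gap. You propose to obtain $f$ from a description of $\mcZ(u)$ as the zero locus of a section $\delta_u$ of a line bundle $\mcL$ on $\mcN$, with $\mcL$ independent of $u$, formation of $\delta_u$ compatible with base change, and $u\mapsto\delta_u$ continuous. None of this is established by the sources you cite: \cite[Proposition 3.5]{KR_local} shows that each $\mcZ(u)$ is a relative Cartier divisor, but it does not produce a canonical $(\mcL,\delta_u)$ with $\mcL$ universal and $\delta_u$ varying continuously in $u$, and \cite[Proposition 2.9]{RZ_book} only provides the closed formal subscheme $\mcZ(u)\subseteq\mcN_S$. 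In particular, the continuity of $u\mapsto\delta_u$ and the possibility of trivializing $\mcL$ locally in a way that is uniform along $S$ are precisely the statements one must prove, not assume; you flag this yourself in your last paragraph but do not supply the argument.

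The paper closes the gap by a different and more elementary route. Lemma~\ref{lem1} establishes, for a principal Cartier divisor $V(f)$ on a noetherian affine formal scheme with Artin--Rees constant $k$: (1) any defining equation of $\mcZ\cap U_{m+k}$ lifts to a defining equation of $\mcZ$; (2) any divisor agreeing with $\mcZ$ modulo $I^{k+1}$ satisfies Artin--Rees with the \emph{same} $k$; (3) any Cartier divisor agreeing with $\mcZ$ modulo $I^{k+1}$ is itself principal. Lemma~\ref{lem2} supplies, for each $m$, a lattice $\Lambda\subseteq V$ with $U_m\subseteq\mcZ(v)$ for all $v\in\Lambda$, so that nearby $u(s)$ give divisors agreeing modulo $I^{k+1}$. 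Fixing $s_0$, shrinking $\mcU$ so that $\mcZ(u(s_0))\cap\mcU=V(f_0)$, taking $k$ for $f_0$ and $\Lambda$ for $U_{k+1}$, and shrinking $S$ so that $u(s)\in u(s_0)+\Lambda$, one then builds $f\in C(S,A)$ by an inductive lifting of defining equations modulo $I^{m+k}$, and the non-zero-divisor property of the fibres $f(s)$ (plus the uniform $k$, via Example~\ref{ex closed ideals}(3)) yields closedness of $(f)$. This avoids any appeal to a universal line bundle and replaces the unproved continuity claim by concrete Artin--Rees uniformity. Your plan would become a correct alternative only if you actually established the $(\mcL,\delta_u)$ package with the asserted properties, which is a substantial task and not a consequence of the cited results.
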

\emph{Proof.}
We begin with an auxiliary result on Cartier divisors on noetherian formal schemes. Let $A$ be an $I$-adically complete noetherian ring and set $\mcU = \Spf A$ as well as $U_m := \Spec A/I^m$.
Consider a principal Cartier divisor $\mcZ = V(f) \subseteq \mcU$. By Artin--Rees there is $k\geq 1$ such that for all $m\geq 0$
\begin{equation}\label{eq AR1}
I^{m+k}\cap (f) = I^m(I^k\cap (f)).
\end{equation}
\begin{lem}\label{lem1}
The following further properties hold.
\begin{enumerate}[leftmargin=*, label=(\arabic*), topsep=2pt, itemsep=2pt]
\item For all $m\geq 1$, any defining equation for $\mcZ\cap U_{m+k}$ lifts to a defining equation for $\mcZ$. In particular, any two defining equations for $\mcZ\cap U_{m+k}$ differ by a unit.
\item Let $V(g)\subseteq \mcU$ be another principal Cartier divisor such that $V(g)\cap U_{k+1} = \mcZ \cap U_{k+1}$. Then Artin--Rees holds with the same $k$ for $g$,
\begin{equation}
\label{eq AR2}
I^{m+k}\cap (g) = I^m(I^k\cap (g)),\ \ m\geq 0.
\end{equation}
\item Any Cartier divisor $\mcZ'\subseteq \mcU$ with $\mcZ'\cap U_{k+1} = \mcZ \cap U_{k+1}$ is principal. 
\end{enumerate}
\end{lem}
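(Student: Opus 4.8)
The plan is to work entirely in the noetherian complete local picture and exploit \eqref{eq AR1} repeatedly. First I would set up notation: write $f$ for a fixed defining equation of $\mcZ$, so $(f) = \ker(A \to \mcO_\mcZ)$, and recall that $f$ is a non-zero divisor since $\mcZ$ is a Cartier divisor. For part (1), suppose $\bar g \in A/I^{m+k}$ is a defining equation for $\mcZ \cap U_{m+k} = V(f) \cap U_{m+k}$, i.e. the ideal it generates in $A/I^{m+k}$ equals the image of $(f)$. Lift $\bar g$ arbitrarily to $g \in A$. Then $g = uf + a$ with $u \in A$ and $a \in I^{m+k}$, and conversely $f = vg + b$ with $b \in I^{m+k}$; substituting gives $f = vuf + (vb + \dots) $, so $f(1 - vu) \in I^{m+k} \cap (f)$. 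By \eqref{eq AR1} this lies in $I^m(I^k \cap (f)) \subseteq I^m(f)$ (since $I^k \cap (f) \subseteq (f)$), and because $f$ is a non-zero divisor we may cancel it: $1 - vu \in I^m \subseteq I$ (as $m \ge 1$), so $u$ is a unit. Then $u^{-1}g$ is a genuine lift of $\bar g$ (check: it differs from $f$ by something in $I^{m+k}$... more precisely $g - uf = a \in I^{m+k}$, so $(g) = (uf) + (\text{correction})$; one argues $(g) = (f)$ as ideals of $A$ after possibly adjusting — here one should be slightly careful and instead argue directly that $(g) = (f)$ using that $g \equiv uf$ and $a \in I^{m+k} \cap (f)$ by comparing with the relation $\mcZ \cap U_{m+k} = V(g) \cap U_{m+k}$). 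The "in particular" clause is immediate: two defining equations $g_1, g_2$ for $\mcZ \cap U_{m+k}$ both lift to defining equations for $\mcZ$, hence generate $(f)$, hence differ by a unit.

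For part (2), the point is that $V(g) \cap U_{k+1} = \mcZ \cap U_{k+1}$ already forces, via part (1) applied with $m = 1$, that $g$ and $f$ differ by a unit in $A$: indeed $g \bmod I^{k+1}$ and $f \bmod I^{k+1}$ are two defining equations for $\mcZ \cap U_{k+1}$, so by (1) (with $m=1$) they differ by a unit, and that unit lifts. Hence $(g) = (f)$ as ideals of $A$, and \eqref{eq AR2} is literally \eqref{eq AR1}. Part (3) is the same observation globalized to an arbitrary Cartier divisor $\mcZ'$, which Zariski-locally on $\mcU = \Spf A$ is principal; but $\mcU$ is affine and $\mcZ' \cap U_{k+1} = \mcZ \cap U_{k+1}$ is principal (generated by $\bar f$), and one shows the invertible sheaf $\mcI_{\mcZ'}$ is trivial by lifting the trivialization of $\mcI_{\mcZ'}/I^{k+1}$: since $\mcZ'$ is a compatible system of Cartier divisors on the $U_m$, and each $\mcI_{\mcZ'}|_{U_m}$ is invertible and agrees with $(\bar f)$ on $U_{k+1}$, one inductively lifts a generator from $U_m$ to $U_{m+1}$ using that $\mcZ' \cap U_{m+1}$ restricts to $\mcZ' \cap U_m$ and invoking the unit-ambiguity from (1). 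Passing to the limit gives a global generator, so $\mcZ'$ is principal; and then (2) applies to it.

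The main obstacle I anticipate is the bookkeeping in part (1): carefully extracting that the "change of coordinates" $u$ between two lifts is a unit, and then confirming that the adjusted lift $u^{-1} g$ really does cut out the same ideal $(f)$ in $A$ (not just modulo $I^{m+k}$). This is where \eqref{eq AR1} and the non-zero-divisor property of $f$ are both essential — without the Artin--Rees control one only gets $1 - vu \in I^{m+k} : (f)$, which need not lie in $I$. Everything else (parts (2) and (3)) is a formal consequence of (1) together with the standard fact that a Cartier divisor on an affine formal scheme whose reduction is principal, with controlled unit ambiguity at each level, is itself principal by a limit argument. I would present (1) in full detail and treat (2), (3) more briefly.
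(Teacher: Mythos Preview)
Your argument for part (1) is essentially the paper's, modulo a harmless slip: the correct lift of $\bar g$ is $uf$ (which visibly generates $(f)$ and reduces to $\bar g$), not $u^{-1}g$.

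The real problem is part (2). From $V(g)\cap U_{k+1}=\mcZ\cap U_{k+1}$ and part (1) you correctly deduce that $\bar g$ and $\bar f$ differ by a unit in $A/I^{k+1}$, hence $g\equiv uf\pmod{I^{k+1}}$ for a unit $u\in A$. But this does \emph{not} give $(g)=(f)$ in $A$: the difference $g-uf$ lies in $I^{k+1}$, not in $(f)$. A concrete counterexample is $A=\kappa[[x,y]]$, $I=(x,y)$, $f=x$; here $k=1$ works in \eqref{eq AR1}, and $g=x+y^2$ satisfies $(g)+I^2=(f)+I^2$, yet $(g)\neq (f)$. So your reduction of \eqref{eq AR2} to \eqref{eq AR1} collapses. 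Part (1) only tells you that \emph{some} lift of $\bar g$ generates $(f)$, not that the given $g$ does. The paper instead proves \eqref{eq AR2} by a genuine induction on $c$ in the statement $I^{m+k}\cap(g)\subseteq I^c(I^k\cap(g))$, writing $g=f+h$ with $h\in I^{k+1}$ and using \eqref{eq AR1} for $f$ to push $c$ to $c+1$.

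This error propagates to (3). Your inductive lifting sketch is the right shape, but to apply (1) on the open pieces where $\mcZ'=V(g^{(i)})$ is principal you need the Artin--Rees bound $k$ for each $g^{(i)}$; that is exactly what a correct (2) provides. (Incidentally, if your claimed $(g)=(f)$ were true, (3) would be trivial: locally $\mcZ'=\mcZ$, hence globally, so no lifting argument would be needed. That you still sketch one suggests you sensed something was off.) The paper's (3) makes this dependency explicit: it localizes \eqref{eq AR1}, invokes (2) to get the same $k$ for the local equations $g^{(i)}$, then lifts level by level and kills the resulting unit $1$-cocycle using $H^1(U_{m+k+1},I^{m+k}/I^{m+k+1})=0$ on the affine $U_{m+k+1}$.
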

\begin{proof}
To prove (1), let $h\in A$ be such that $(h) + I^{m+k} = (f) + I^{m+k}$ with $m\geq 1$. Then there are $s,t\in A$ such that $f = sh\mod I^{m+k}$ and $h = tf\mod I^{m+k}$. Hence $(1-st)f \in I^{m+k}\cap (f) = I^m(I^k\cap (f))$. Since $f$ is a non-zero divisor, $(1-st)\in I^m$. It follows that $st\in 1+ I^m$ is invertible which concludes (1).

For (2), we show \eqref{eq AR2} by inducting over $c = 0,\ldots,m$ in the statement
$$I^{m+k}\cap (g) \subseteq I^c(I^k\cap (g)),$$
the case $c=0$ being tautological. So assume $xg\in I^{m+k}$ can be written as $\sum t_ix_ig$ with $t_i\in I^c$ and $x_ig\in I^k$. Since $g$ is a non-zero divisor this just means $x = \sum t_ix_i$. Our assumptions imply that $g = f + h$ for some $h\in I^{k+1}$, leading to the relation
$$\sum t_ix_i(f + h) \in I^{m+k}.$$
Then $\sum t_ix_ih \in I^{c+k+1}$, so for $c<m$
$$xf = \sum t_ix_if \in I^{c+k+1}\cap (f) = I^{c+1}(I^k \cap (f)).$$
Since $f$ is a non-zero divisor, this means one can write
$$x = \sum s_jy_j$$
with $s_j\in I^{c+1}$ and $y_jf\in I^k$. Then also $y_jg \in I^k$ for all $j$, so
$$xg = \sum s_jy_jg \in I^{c+1}(I^k\cap (g)).$$

We turn to (3). By assumption $\mcZ' \cap U_{k+1} = \mcZ \cap U_{k+1}$ is defined by a single equation, namely $g_{k+1} := f$. We argue by induction on $m\geq 1$ that $\mcZ'\cap U_{m+k} = V(g_{m+k})$ for compatible $g_{m+k}$. Let $\mcU = \cup_{i}\, \mcU^{(i)}$ be an affine open covering such that all $\mcZ' \cap \mcU^{(i)} = V(g^{(i)})$ are principal. The Artin--Rees relation for $f$ \eqref{eq AR1} is stable under localization and completion ($A$ is noetherian), so by (2) the $g^{(i)}$ satisfy an Artin--Rees type relation for the same $k$,
$$(IA_i)^{m+k} \cap g^{(i)}A_i = (IA_i)^m((IA_i)^k\cap g^{(i)}A_i),\ \ m\geq 0.$$
By (1) applied to the $V(g^{(i)})$, an element $g_{m+k}$ can be lifted to a defining equation $g^{(i)}_{m+k+1}$ of $\mcZ'\cap \mcU^{(i)}\cap U_{m+k+1}$. Again by (1) applied to the intersections $\mcU^{(i)}\cap \mcU^{(j)}$ these differ by units on overlaps, defining a $1$-cocycle with values in $\mcO_{U_{m+k+1}}^\times$. Its cohomology class lies in the image of
$$H^1(U_{m+k+1}, I^{m+k}/I^{m+k+1})→H^1(U_{m+k+1}, \mcO_{U_{m+k+1}}^\times).$$
But the left hand side cohomology group vanishes since $U_{m+k+1}$ is affine. The lifts $g^{(i)}_{m+k+1}$ may hence be chosen compatibly, defining $g_{m+k+1}$, concluding induction and proof.
\end{proof}

Assume now that the above $\mcU$ is an affine open of $\mcN$. The aim will be to apply the previous lemma to two KR-divisors $\mcZ(v)\cap \mcU$ and $\mcZ(v')\cap \mcU$. The next lemma shows that for $v$ and $v'$ close enough, the condition from (2) and (3) is met.

\begin{lem}\label{lem2}
Given $m$, there is an $\mcO_F$-lattice $Λ\subseteq V$ such that $U_m \subseteq \mcZ(v)$ for all $v\in Λ$.
\end{lem}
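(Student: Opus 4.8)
The plan is to reduce the statement to a finiteness assertion about the universal deformation of $\overbar\mbE$ restricted to the Artinian base $U_m = \Spec \mcO_{\breve F}/I^m$. Recall that $\mcZ(v)$ parametrizes those points at which the special homomorphism $\rho^{-1}v\colon \overbar\mbE \to X$ lifts. The condition $U_m \subseteq \mcZ(v)$ therefore says: over the whole Artinian scheme $U_m$, viewed as an $\mcN$-scheme via the composition $U_m \to \Spf \mcO_{\breve F} \to \mcN$ determined by the framing object and its canonical lift, the quasi-homomorphism $v$ lifts. Since $v \mapsto \rho^{-1}v$ is $\mcO_F$-linear, the set of such $v$ is an $\mcO_F$-submodule $\Lambda_m \subseteq V$, so the content is purely that $\Lambda_m$ is a \emph{lattice}, i.e. that it is open (equivalently, that $p^N V \subseteq \Lambda_m$ for some $N$).

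First I would set up the local model. Pull back the universal $p$-divisible group over $\mcN$ along $U_m \to \mcN$ to get a (trivial) deformation $X_{U_m}$ of $\mbX$ over $U_m$, together with the constant deformation $\overbar\mcE_{U_m}$ of $\overbar\mbE$. A quasi-homomorphism $v \in V = \Hom^0_F(\overbar\mbE, \mbX)$ lifts over $U_m$ if and only if, after clearing denominators, the induced map on $p$-divisible groups over the special fiber lifts over $U_m$; by Grothendieck–Messing this is a statement about compatibility with the Hodge filtrations, and because $U_m$ is Artinian with $I^m = 0$, the obstruction is controlled by a finite-length $\mcO_{\breve F}/I^m$-module. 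Concretely, one has a crystal/Dieudonné-theoretic obstruction class $\mathrm{obs}(v) \in \Hom(\Lie \overbar\mcE^\vee,\, \Lie X_{U_m})$ or a similar finite module $N_m$ of finite length over $\mcO_{\breve F}/I^m$, depending $\mcO_F$-linearly on $v$, and $v$ lifts iff $\mathrm{obs}(v) = 0$.

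The key step is then to observe that $\mathrm{obs}$ is $\mcO_F$-linear from $V$ to the \emph{finite-length} module $N_m$, hence factors through $V/p^N V$ for $N$ large enough (any $N$ with $p^N N_m = 0$ works, using that the framing quasi-isogeny $\rho$ has bounded denominators so that $p^N v$ is an actual homomorphism $\overbar\mcE_{U_m} \to X_{U_m}$ for $v$ in a fixed lattice). Therefore $\Lambda_m = \ker(\mathrm{obs})$ contains $p^N \Lambda_0$ for the standard lattice $\Lambda_0 = \Hom_F(\overbar\mbE,\mbX) \cap V$ and is an $\mcO_F$-submodule of $V$ of finite index, i.e. a lattice. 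Taking $\Lambda := \Lambda_m$ finishes the proof.

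I expect the main obstacle to be making the obstruction module $N_m$ and the linear map $v \mapsto \mathrm{obs}(v)$ precise enough to see both the $\mcO_F$-linearity and the finite length cleanly — in particular, keeping track of the fact that for $v$ outside the integral lattice $\rho^{-1}v$ is only a quasi-homomorphism, so the lifting condition has to be phrased either via the Cartier divisor $\mcZ(v)$ directly (as in \cite[Proposition 2.9]{RZ_book}) or after scaling. An alternative, perhaps cleaner, route avoiding explicit Dieudonné theory: use that each $\mcZ(v)$ is a relative Cartier divisor on the noetherian formal scheme $\mcN$, that $\mcZ(p v) \supseteq \mcZ(v)$, and that the $\mcZ(v)$ for $v$ ranging over a fixed lattice $\Lambda_0$ cut out, scheme-theoretically on the Artinian thickening $U_m$ of a quasi-compact open, only finitely much — so that $\mcZ(p^N v) \cap U_m = U_m$ for all $v \in \Lambda_0$ once $N$ is large; this is essentially the statement that the ideal sheaves of the $\mcZ(v)$ become divisible-by-$p^N$ modulo $I^m$ on a quasi-compact piece, which one reduces to $\mcN$ being locally noetherian together with the boundedness of denominators of $\rho$. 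Either way, the heart of the matter is a finite-length bound that is uniform in $v$, which is where I would concentrate the technical effort.
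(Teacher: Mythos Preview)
Your main approach rests on a misreading of $U_m$. In the paper's setup, $\mcU = \Spf A$ is an affine open of $\mcN$ and $U_m := \Spec A/I^m$; this is an infinitesimal thickening of an open piece of the special fiber of $\mcN$, not of a single point via the framing object. In particular $U_m$ is \emph{not} Artinian (its reduction has dimension $n-1$), and $\mcX\vert_{U_m}$ is the universal object, not a trivial deformation. Your obstruction module $N_m$ therefore has no reason to be of finite length, and the step ``$\mathrm{obs}$ factors through $V/p^NV$ because $p^N N_m = 0$'' does not go through as written.

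The paper's proof is a two-liner and avoids any obstruction calculus. The one input you are missing is rigidity of quasi-homomorphisms along nilpotent thickenings: since $p$ is nilpotent on $U_m$, the quasi-homomorphism $\rho^{-1}\circ v$ over the special fiber lifts uniquely to a quasi-homomorphism $\overbar\mcE\times_{\Spf \mcO_{\breve F}} U_m \to \mcX\vert_{U_m}$. ``Quasi-'' just means ``in $\Hom\otimes\mbQ$'', so over the noetherian scheme $U_m$ there is an integer $r = r(v,U_m)$ with $\rho^{-1}\circ p^r v$ an honest homomorphism, i.e.\ $U_m\subseteq \mcZ(p^r v)$. Applying this to an $F$-basis $v_1,\ldots,v_n$ of $V$ and setting $\Lambda = \sum_i \mcO_F\, p^{r_i} v_i$ finishes the proof. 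Your ``alternative route'' is circling this idea (you mention boundedness of denominators of $\rho$ and $\mcZ(pv)\supseteq \mcZ(v)$) but never names the decisive step, namely that the quasi-homomorphism already lifts over all of $U_m$ before one clears the $p$-power.
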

\begin{proof}
Consider a single $v\in V$ and denote the universal tuple over $\mcN$ by $(\mcX,ι,λ,ρ)$. The map
$$ρ^{-1}\circ v\colon \mcE \times_{\Spf \mcO_{\breve F}} U_m → \mcX\vert_{U_m}$$
being a quasi-homomorphism means that there is some integer $r$ (depending on $v$ and $U_m$) such that
$$ρ^{-1}\circ p^r v\in \Hom(\mcE \times_{\Spf \mcO_{\breve F}} U_m, \mcX).$$
Applying this to an $F$-basis of $V$ shows that there is a lattice $Λ\subseteq V$ (depending on $U_m$) such that $U_m \subseteq \mcZ(v)$ for all $v\in Λ$.
\end{proof}

We finally return to the original question for a family $u\colon S→V\setminus\{0\}$. The claim ($\mcZ(u)$ being a Cartier divisor in $\mcN_S$) is local, so we may restrict attention to $\mcZ(u)\cap \mcU_S$. Fix $s_0\in S$. Shrinking $\mcU$ we may assume $\mcZ(u(s_0))\cap \mcU = V(f_0)$ for some $f_0$. Let $k$ be an index as in Lemma \ref{lem1} (1) for $f_0$, let $Λ\subseteq V$ be a lattice for $U_{k+1}$ as in Lemma \ref{lem2}. Then for all $v \in u(s_0) + Λ$ and all $m\geq 1$, $\mcZ(v)\cap \mcU$ is principal by Lemma \ref{lem1} (3) and any defining equation for $\mcZ(v)\cap U_{m+k}$ may be lifted to a defining equation of $\mcZ(v)$ by (2).

Shrinking $S$ to an open compact neighborhood of $s_0$, we may assume $u(s)\in u(s_0)+Λ$ for all $s\in S$. The intersections $\mcZ(u)\cap (U_{m+k})_S,\ m\geq 1$, are locally constant by Lemma \ref{lem2} and, in particular, defined by a single equation. Any choice of equation may be lifted to a defining equation for $\mcZ(u)\cap (U_{m+k+1})_S$ by Lemma \ref{lem1} (1). By induction and limit-taking, one finds $f\in C(S,A)$ such that $V(f) \cap (U_{m+k})_S = \mcZ(u)\cap (U_{m+k})_S$ for all $m$. The fibers $f(s)$ define the divisors $\mcZ(u(s))\cap \mcU$ and are therefor non-zero divisors. Then Example \ref{ex closed ideals} (3) applies and shows that $C(S,A)\cdot f$ is a closed ideal. This proves that $\mcZ(u)$ is a Cartier divisor.

A product argument or the observation that all arguments similarly apply to $\mcZ(u)\times_{\Spf \mcO_{\breve F}} \mcN_S$ shows that the product is also a Cartier divisor. In particular it satisfies (Z1). Property (Z2) is the statement that $\mcZ(u)$ is a Cartier divisor whose fibers $\mcZ(u(s))$ are also Cartier divisors (cf. Example \ref{exZ2}). The proof is now complete.\qed

Set $\mcZ(g,u) := Δ_S\cap (\id\times g)Δ_S\cap (\mcZ(u)\times_{(\Spf \mcO_{\breve F})_S} \mcN_S)$ for the following.

\begin{lem}\label{lem loc const afl}
The fiber-wise intersection $\mcZ(g,u)_s$ is locally constant. In particular, the three cycles from (6)--(8) satisfy (Z3).
\end{lem}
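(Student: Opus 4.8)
The plan is to verify the hypotheses of the fibre criterion, Corollary \ref{cor fib crit}: it suffices to show that each fibre $\mcZ(g,u)_s = \Delta\cap(\id\times g(s))\Delta\cap(\mcZ(u(s))\times_{\Spf\mcO_{\breve F}}\mcN)$ is a proper scheme over $\Spec\mcO_{\breve F}$ — which holds by \cite[Lemma 6.1]{M_rel_unit_RZ} — and that $s\mapsto\mcZ(g,u)_s$ is locally constant; this then also yields (Z3) and, once more by Corollary \ref{cor fib crit}, the local constancy of $\mcZ(g,u)$ itself. For the second point I would first observe that every fibre is supported in the special fibre of $\mcN$. Identifying $\Delta\cong\mcN$ via the first projection, $\mcZ(g,u)_s$ becomes the locus $\mcN^{g(s)}\cap\mcZ(u(s))$ of $g(s)$-fixed points that lie on the KR-divisor of $u(s)$. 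A point of $\mcN$ is fixed by $g(s)$ exactly when there is an automorphism $\phi$ of the underlying hermitian module which induces $g(s)$ through $\rho$ over the special fibre; transporting the lift of $\rho^{-1}u(s)$ by $\phi$ then shows that such a point also lies on $\mcZ(g(s)^{i}u(s))$ for all $i\geq 0$. Hence $\mcZ(g,u)_s$ is a closed subscheme of $\bigcap_{i\geq 0}\mcZ(g(s)^{i}u(s))$, and since $(g(s),u(s))$ is regular semi-simple and $S$ is compact there is a single integer $M$ with $g(s)^{0}u(s),\dots,g(s)^{M}u(s)$ spanning $V$ for every $s$; thus $\mcZ(g,u)_s$ embeds into an intersection of $n$ KR-divisors attached to an $F$-basis of $V$, which is supported in the special fibre by \cite{KR_local}. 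Consequently $\pi$ is nilpotent in $\mcO_{\mcZ(g,u)_s}$ for each $s$, where $\pi$ is a uniformiser of $\mcO_{\breve F}$.

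Next I would show that for every $N\geq 1$ the intersection $\mcZ(g,u)\cap (V(\pi^{N}))_S$ is a locally constant family of schemes over $S$, where $V(\pi^{N}):=\mcM\otimes_{\mcO_{\breve F}}\mcO_{\breve F}/\pi^{N}$ is an honest scheme. Inside the honest scheme $(V(\pi^{N}))_S$ this intersection is that of the constant family $\Delta_S\cap(V(\pi^{N}))_S$ with the two families $((\id\times g)\Delta_S)\cap(V(\pi^{N}))_S$ and $(\mcZ(u)\times_{\Spf\mcO_{\breve F}}\mcN_S)\cap(V(\pi^{N}))_S$; the first of these two is the graph of the automorphism that $g$ induces on $\mcN\otimes_{\mcO_{\breve F}}\mcO_{\breve F}/\pi^{N}$, and the second is cut out by the lifting condition for $\rho^{-1}u$ over $\mcO_{\breve F}/\pi^{N}$-algebras. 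This is the point at which Dieudonné theory enters: over an $\mcO_{\breve F}/\pi^{N}$-algebra both conditions are governed by crystalline deformation theory and therefore depend on $g$ and $u$ only through the morphisms they induce on a bounded truncation $\mbX[p^{N'}]$ over $LC(S,\mbF)$ (for $N=1$ one is over $\mbF$-algebras, where the same boundedness is classical). By the constructions in (7) and (8) those induced morphisms form locally constant families over $S$, so after shrinking $S$ the two cycles become constant; since $A\to LC(S,A)$ is flat (Lemma \ref{lemFlat}), an intersection of locally constant families of closed subschemes of an honest scheme is again locally constant, and the claim follows.

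Finally I would upgrade ``locally constant modulo $\pi^{N}$ for all $N$'' to ``locally constant''. Fix $s_{0}\in S$ and choose $m$ with $\mcZ(g,u)_{s_{0}}\subseteq V(\pi^{m})$. Shrinking $S$ around $s_{0}$ so that $\mcZ(g,u)\cap (V(\pi^{m+1}))_S$ is constant, its value is $\mcZ(g,u)_{s_{0}}\cap V(\pi^{m+1})=\mcZ(g,u)_{s_{0}}$, whose structure sheaf is annihilated by $\pi^{m}$. Writing $B=\mcO_{\mcZ(g,u)_s}$ locally for a nearby $s$, this reads $\pi^{m}\in\pi^{m+1}B$; since $\pi$ is nilpotent in $B$, the factor $1-\pi b$ in a relation $\pi^{m}=\pi^{m+1}b$ is a unit, so $\pi^{m}B=0$ and hence $\mcZ(g,u)_s=\mcZ(g,u)_s\cap V(\pi^{m+1})=\mcZ(g,u)_{s_{0}}$. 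This proves the local constancy of $s\mapsto\mcZ(g,u)_s$, and together with the properness of the fibres it establishes (Z3). The step I expect to be the genuine obstacle is the Dieudonné-theoretic input of the second paragraph — that modulo a power of $\pi$ the two moving cycles are controlled by a bounded truncation of $\mbX$ on which $g$ and $u$ act through locally constant families over $S$; this draws on the theory of (truncated) displays or Dieudonné modules and on the structure of $\mcN$, whereas the fibre criterion, the uniform bound $M$, the flatness of $LC(S,A)$ over $A$, and the final nilpotence bookkeeping are soft.
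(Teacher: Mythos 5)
Your outer scaffolding is sound and close in spirit to the paper's: you invoke Corollary~\ref{cor fib crit} correctly, the properness of the fibres via \cite[Lemma~6.1]{M_rel_unit_RZ} is the right citation, the observation that regular semi-simpleness forces each $\mcZ(g,u)_s$ into the special fibre is correct (and cute), and the final nilpotence bookkeeping that upgrades ``constant modulo $\pi^{m+1}$'' to ``constant'' works. The gap is the central claim of your second paragraph. It is \emph{not} true that $\bigl((\id\times g)\Delta_S\bigr)\cap (V(\pi^N))_S$ depends on $g$ only through its action on a bounded truncation $\mbX[p^{N'}]$, nor that this cycle becomes constant after shrinking $S$. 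The scheme $\mcN\otimes_{\mcO_{\breve F}}\mcO_{\breve F}/\pi^N$ is not quasi-compact, and $g$ acts on it by post-composing the framing $\rho$: deciding whether $g$ and a nearby $g'$ send a given point $(X,\iota,\lambda,\rho)$ to the same point amounts to asking whether $\rho^{-1}(g')^{-1}g\rho$ is an automorphism of $(X,\iota,\lambda)$, and since $\rho$ ranges over quasi-isogenies of arbitrarily large height-denominator, no neighbourhood of $g$ in $G$ works uniformly over all of $\mcN$. The same problem afflicts the claim for $\mcZ(u)$. Only the full triple intersection is bounded, and your argument discards exactly the datum ($u$ together with $g$-stability) that would let you see that bound.

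The paper's proof handles this boundedness directly and reduces to a cleaner, purely lattice-theoretic statement. It first deduces from Proposition~\ref{prop fib crit}~(2) that $\mcZ(g,u)$ is a scheme, notes it is locally cut out by finitely many equations, and by Proposition~\ref{prop fib crit}~(3) reduces (Z3) to \emph{quasi-compactness} of $\mcZ(g,u)$. As $\mcN_{\mr{red}}$ is locally of finite type over $\mbF$, this in turn reduces to local constancy of the sets $\mcZ(g,u)_s(\mbF)$. Passing to Dieudonné lattices, $\mcZ(g,u)_s(\mbF)$ is the set of $g(s)$-stable Dieudonné lattices $M$ containing $u(s)$; the key supplement of boundedness — the one your argument is missing — is that regular semi-simpleness forces any such $M$ into the uniform window $\breve L(s)\subseteq M\subseteq \breve L(s)^\vee$, where $L(s)=\mcO_F[g(s)]u(s)$ is a locally constant family of lattices. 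Within a fixed window only finitely many $M$ occur, and the $g(s)$-stability condition for them depends on $g(s)$ only modulo $\Hom(L^\vee,L)$, which is locally constant. This is the bounded truncation you were reaching for, but applied to the correct finite object: the lattice window determined by $(g,u)$ rather than a truncation of the non-quasi-compact ambient space. If you wanted to persist with your route, you would similarly have to insert this $\breve L$-window argument (fibrewise and then uniformly in $s$) before asserting local constancy modulo $\pi^N$, at which point the detour through $\pi$-truncations becomes unnecessary.
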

\begin{proof}
We already know that each $\mcZ(g,u)_s$ is a proper scheme (see (5) above), so $\mcZ(g,u)$ is itself a scheme by Proposition \ref{prop fib crit} (2). Moreover, it is locally defined by finitely many equations since this is true for each of the three individual cycles. By Proposition \ref{prop fib crit} (3), all that is left to check is its quasi-compactness. This is a statement about the maximal contained reduced subscheme only. It reduces further to checking the local constancy of the $\mbF$-points $\mcZ(g,u)_s(\mbF)$ since $\mcN_{\mr{red}}$ is locally of finite type over $\Spec \mbF$. We use covariant relative Dieudonné theory (i.e. for strict formal $\mcO_{F_0}$-modules) to prove this. More details on Dieudonné theory in this context may be found in \citelist{\cite{M_rel_unit_RZ}*{§2} \cite{KR_local}*{§2} \cite{RTZ_AFL}}.

Let $(N,τ)$ be the $F_0$-isocrystal of $(\mbX,ι_\mbX,λ_\mbX)$, which is $2n$-dimensional $\breve F$-vector spaces together with an $F_0$-linear Frobenius $τ$, a polarization $λ\colon N\times N→\breve F$ and an \emph{additional} $F$-action $ι$. The latter makes it into a free rank $n$ module over $F\tensor_{F_0}\breve F$. By \emph{Dieudonné lattice}, we mean a self-dual $\mcO_F\tensor_{\mcO_{F_0}}\mcO_{\breve F}$-lattice $M\subset N$ that is stable under the Frobenius $τ$, the Verschiebung $πτ^{-1}$ ($π$ some fixed uniformizer of $\mcO_{F_0}$) and has $\mcO_F$ acting with signature $(1,n-1)$ on its Lie algebra $M/(πτ^{-1}M)$. These lattices are in bijection with $\mcN(\mbF)$. The choice of an $\mcO_F\tensor_{\mcO_{F_0}} \mcO_{\breve F}$-generator $m$ of the Dieudonné lattice of $\overbar{\mbE}$ defines an embedding $V\hookrightarrow N,\ v\mapsto v(m)$ and we identify $V$ with its image. The embedding is $G$-equivariant, isometric up to a scalar in $\mcO_{\breve F}^\times$ and such that $M\in \mcZ(v)(\mbF)$ if and only if $v\in M$.

Identifying the diagonal $Δ$ with $\mcN$, we see that $\mcZ(g,u)_s(\mbF)$ is in bijection with $g(s)$-stable Dieudonné lattices that contain $u(s)$. In particular, $\mcZ(g,u)_s = \emptyset$ if $g(s)$ does not have integral characteristic polynomial and the locus of such $s$ is open and closed. We may hence assume $g(s)$ to have integral characteristic polynomial for all $s$. Then $L(s):=\mcO_F[g(s)]u(s)$ is a locally constant family of lattices (by definition of regular semi-simpleness) and we may assume it to be constant, say constantly $L$. Any Dieudonné lattice $M\in \mcZ(g,u)_s(\mbF)$ then satisfies $\breve L\subseteq M\subseteq \breve L^\vee$, where $\breve L = (\mcO_F\tensor_{\mcO_{F_0}}\mcO_{\breve F})\cdot L$ and where $\breve L^\vee$ is the dual $\mcO_{\breve F}$-lattice. The condition of being $g(s)$-stable then only depends on the class of $g(s)$ in $\End(V)$ modulo $\Hom(L^\vee, L)$ and this class is locally constant. The lemma follows.
\end{proof}

Summing up, we defined three families of cycles that satisfy both (Z1), (Z2) and whose fibers over $S$ are precisely the cycles from the definition of $\Int(g,u)$. By Lemma \ref{lem loc const afl}, they also satisfy (Z3) and an application of Theorem \ref{thmLocconst} finishes the proof of Theorem \ref{thmAFL}.\qed

(9) For the group version of the AFL, we fix some $u_0$ such that $(u_0,u_0) = 1$, where $(\ ,\ )$ is the hermitian form on $V$. Let us call $g\in G$ \emph{regular semi-simple} if $(g,u_0)$ is regular semi-simple in the previous sense. For such $g$, we set $\Int(g) := \Int(g,u_0)$, which is the relevant intersection number, see \cite[§3]{Z_proof_AFL}. We immediately obtain the
\begin{cor}\label{cor afl grp loc const}
The group version intersection number $\Int(g)$ is locally constant on $G_{\mr{rs}}$.
\end{cor}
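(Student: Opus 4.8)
The plan is to deduce Corollary \ref{cor afl grp loc const} directly from Theorem \ref{thmAFL} by realizing $G_{\mr{rs}}$ as a locally closed (in fact clopen) slice of $(G\times V)_{\mr{rs}}$. The only content is to check that the assignment $g\mapsto (g,u_0)$ sends $G_{\mr{rs}}$ into $(G\times V)_{\mr{rs}}$ and that it is continuous, so that local constancy can be transported along it.

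First I would recall, as just defined in (9), that $g\in G$ is by definition regular semi-simple precisely when $(g,u_0)\in (G\times V)_{\mr{rs}}$; this is exactly the condition that $\{g^i u_0\}_{i\geq 0}$ spans $V$ over $F$. Hence the inclusion $G_{\mr{rs}}\hookrightarrow G$ factors as
$$G_{\mr{rs}}\xrightarrow{\ g\mapsto (g,u_0)\ }(G\times V)_{\mr{rs}},$$
and this map is continuous because it is the restriction of the continuous map $G\to G\times V$, $g\mapsto (g,u_0)$, whose second coordinate is constant. (That $G_{\mr{rs}}$ is open in $G$ follows since $(G\times V)_{\mr{rs}}$ is open in $G\times V$ and the slice $G\times\{u_0\}$ carries the subspace topology; but for the statement it suffices that $G_{\mr{rs}}$ is a topological space and the map into $(G\times V)_{\mr{rs}}$ is continuous.)

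Now pull back along this map: for any continuous $g\colon S\to G_{\mr{rs}}$ from a profinite set $S$, the pair $(g,u_0)\colon S\to (G\times V)_{\mr{rs}}$ is continuous, where $u_0$ denotes the constant family. By definition $\Int(g):=\Int(g,u_0)$, so $s\mapsto\Int(g(s))$ equals $s\mapsto\Int(g(s),u_0)$, which is locally constant by Theorem \ref{thmAFL}. Since local constancy of a function on $G_{\mr{rs}}$ can be tested on compact open neighborhoods, and every point of $G_{\mr{rs}}$ has a profinite open neighborhood (as $G$ is locally profinite), this shows $\Int(g)$ is locally constant on $G_{\mr{rs}}$.

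There is essentially no obstacle here: the work was all done in Theorem \ref{thmAFL}, and the corollary is a formal specialization. The only point requiring a word is that the three family cycles from (6)--(8) specialize correctly when $u$ is the constant family $u_0$ — but this is immediate, since a constant section of $V$ produces, via the construction in (8), exactly the base change $\mcZ(u_0)_S$ of the fixed KR-divisor $\mcZ(u_0)\subseteq\mcN$ (it defines the same quasi-homomorphism over $LC(S,\mbF)$ as the one coming from $u_0\in V$ over $\mbF$), so the fibers of the family intersection problem at every $s$ reproduce the defining data of $\Int(g(s))=\Int(g(s),u_0)$. Hence one may simply invoke Theorem \ref{thmAFL}.
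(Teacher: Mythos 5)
Your proof is correct and is exactly what the paper intends by ``we immediately obtain'': the corollary is the specialization of Theorem \ref{thmAFL} to the slice $g\mapsto(g,u_0)$, using the definition $\Int(g):=\Int(g,u_0)$. You have merely spelled out the (routine) verification that this slice is continuous and that constant $u\equiv u_0$ reproduces the fixed KR-divisor, which the paper leaves implicit.
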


\begin{bibdiv}
\begin{biblist}

\bib{EGAI}{article}{
  title = {\'El\'ements de g\'eom\'etrie alg\'ebrique : I. Le langage des sch\'emas},
  author = {Grothendieck, Alexander},
  journal = {Publ. Math. Inst. Hautes \'Etudes Sci.},
  publisher = {Institut des Hautes \'Etudes Scientifiques},
  volume = {4},
  pages = {5-228},
  date = {1960}
}
\bib{KR_local}{article}{
  title={Special cycles on unitary Shimura varieties I. Unramified local theory},
  author={Kudla, Stephen},
  author={Rapoport, Michael},
  journal={Invent. Math.},
  volume={184},
  number={3},
  pages={629--682},
  date={2011}
}
\bib{M_rel_unit_RZ}{article}{
  title={Relative unitary RZ-spaces and the arithmetic fundamental lemma},
  author={Mihatsch, Andreas},
  journal={J. Inst. Math. Jussieu},
  pages={1--61}
}
\bib{RSZ_regular_formal_moduli}{article}{
  title={Regular formal moduli spaces and arithmetic transfer conjectures},
  author={Rapoport, Michael},
  author={Smithling, Brian},
  author={Zhang, Wei},
  journal={Math. Ann.},
  volume={370},
  number={3-4},
  pages={1079--1175},
  date={2018}
}
\bib{RTZ_AFL}{article}{
  title={On the arithmetic fundamental lemma in the minuscule case},
  author={Rapoport, Michael},
  author={Terstiege, Ulrich},
  author={Zhang, Wei},
  journal={Compos. Math.},
  volume={149},
  number={10},
  pages={1631--1666},
  date={2013},
}
\bib{RZ_book}{book}{
  title={Period Spaces for $p$-divisible Groups},
  author={Rapoport, Michael},
  author={Thomas, Zink},
  number={141},
  date={1996},
  publisher={Princeton University Press}
}
\bib{Stacks}{article}{
    author={The Stacks Project Authors},
    title={Stacks Project},
    journal={https://stacks.math.columbia.edu},
    date={2020}
}
\bib{Z_proof_AFL}{article}{
  title={Weil representation and arithmetic fundamental lemma},
  author={Zhang, Wei},
  journal={Preprint arXiv:1909.02697},
  date={2019}
}

\end{biblist}
\end{bibdiv}
\end{document}